\def\opn#1#2{\def#1{\operatorname{#2}}} % to make operators
\opn\chara{char} \opn\length{\ell} \opn\pd{pd} \opn\rk{rk}
\opn\projdim{proj\,dim} \opn\injdim{inj\,dim} \opn\rank{rank}
\opn\depth{depth} \opn\grade{grade} \opn\height{height}
\opn\embdim{emb\,dim} \opn\codim{codim}
\opn\Tr{Tr} \opn\bigrank{big\,rank}
\opn\superheight{superheight}\opn\lcm{lcm}
\opn\trdeg{tr\,deg}%\emph{
\opn\reg{reg} \opn\lreg{lreg} \opn\ini{in} \opn\lpd{lpd} \opn\HS{HS}
\opn\size{size} \opn\sdepth{sdepth} \opn\Ass{Ass}\opn\LT{LT}
\opn\depth{depth} \opn\k{\bf{k}}
\newtheorem{Theorem}{Theorem}[section]
\newtheorem{Lemma}[Theorem]{Lemma}
\newtheorem{Corollary}[Theorem]{Corollary}
\newtheorem{Proposition}[Theorem]{Proposition}
\newtheorem{Remark}[Theorem]{Remark}
\newtheorem{Example}[Theorem]{Example}
\newtheorem{Definition}[Theorem]{Definition}
\let\epsilon\varepsilon
\let\kappa=\varkappa
\def\qed{\ifhmode\textqed\fi
      \ifmmode\ifinner\quad\qedsymbol\else\dispqed\fi\fi}
\def\textqed{\unskip\nobreak\penalty50
       \hskip2em\hbox{}\nobreak\hfil\qedsymbol
       \parfillskip=0pt \finalhyphendemerits=0}
\def\dispqed{\rlap{\qquad\qedsymbol}}
\opn\dis{dis}
\def\pnt{{\raise0.5mm\hbox{\large\bf.}}}
\opn\Lex{Lex}
\begin{document}

\title{On  $m$-closed graphs}

%\email{fatemeh.mohammadi716@gmail.com}

\author {  Leila Sharifan, Masoumeh Javanbakht   }

\address{ Department of Mathematics, Hakim Sabzevari University, Sabzevar, Iran, P.O. Box 397}
%\address{and}
%\address{ School of Mathematics, Institute for research in
%Fundamental Sciences (IPM), P. O. Box: 19395-5746, Tehran, Iran}
\email {leila-sharifan@aut.ac.ir, \ masumehjavanbakht@gmail.com}

\subjclass[2010]{05C78, 05C25, 13P10, 05E40.} \keywords{ $m-$closed
graph, binomial edge ideal, reduced Gr\"{o}bner basis, admissible
path}
 %\thanks{This research  was in part supported by a grant from IPM, No. 900130066}

\begin{abstract}
A graph is closed when its vertices have a labeling by $[n]$ such
that the binomial edge ideal $J_G$ has a quadratic Gr\"{o}bner basis
with respect to the lexicographic order induced by $x_1
> \ldots
> x_n > y_1> \ldots > y_n$. In this paper, we generalize this notion
and study the so called $m-$closed graphs. We find equivalent
condition to $3-$closed property of an arbitrary  tree $T$. Using
it, we classify a class of $3-$closed trees. The primary
decomposition of this class of graphs is also studied.
\end{abstract}

\maketitle
 \section{Introduction and Preliminaries}
 Suppose $G$ is a simple graph on the vertex set $[n]$ and
$R = \k[x_1,\ldots, x_n, y_1,\ldots, y_n]$ is the polynomial ring
over the field $\k$. The {\em binomial edge ideal} of $G$ is the
ideal
\[
J_G=(f_{ij}:\ \{i,j\}\in E(G)\ {\rm and}\ i<j)\subset R,
\]
where $f_{ij} = x_iy_j -x_jy_i$. This notion was first introduced in
\cite{HHKR} and independently in \cite{O}. \vspace{2mm}

Note that any ideal generated by a set of $2$-minors of a $2 \times
n$-matrix $X$ of indeterminates may be viewed as the binomial edge
ideal of a graph. In \cite{HHKR}, the authors compute the reduced
Gr\"{o}bner basis of the binomial edge ideal with respect to the
lexicographic order induced by $x_1>\cdots>x_n> y_1>\cdots >y_n$ (we
show this order by $\prec$). In particular, they find the necessary
and sufficient conditions in which $J_G$ has a quadratic Gr\"{o}bner
basis. Graphs whose binomial edge ideal has a quadratic Gr\"{o}bner
basis are called closed graphs and the Cohen-Macaulay property of
these graphs is studied in \cite{EHH}.  Recently, many authors
studied the algebraic properties of some classes of binomial edge
ideals. In particular the regularity and the depth  are studied in
\cite{ChDI,EHH,EZ,SK2,MM,SK}. But the reduced Gr\"{o}bner basis
obtained in \cite{HHKR} has not been studied in more details.

In this paper, we study the  Gr\"{o}bner basis of $J_G$ where $G$ is
a simple graph. We call $G$ an $m-$closed graph when its vertices
can be labeled by $[n]$ such that the elements of the reduced
Gr\"{o}bner basis of $J_G$ have degree at most $m$, and $m$ is the
least integer with this property for $G$. Note that by this
definition, a closed graph is a $2-$closed graph.

In Section 2 we study some basic properties of $m-$closed graphs. In
particular, we show that a cycle $C_n$  ($n>3$) is $m-$ closed where
$m=\left\{
\begin{array}{ll}
    \frac{n}{2}+1 & \hbox{$n$ is even;} \\
    \frac{n+1}{2}+1 & \hbox{$n$ is odd.} \\
\end{array}%
\right.$ (see Theorem \ref{cycle}).
 Using it we conclude that in each
$m-$closed graph, any cycle with at least $2m-1$ vertices has a
chord.

The notion of weakly closed graphs has been introduced in \cite{M1}
as a generalization of closed graphs. The final result of section 2
shows that each weakly closed graph is $m-$closed for some $m\leq 4$
 (see Theorem \ref{weakly closed}).

In Section 3 we study $3-$closed property of trees and we show that
a tree $T$ with $n$ vertices is $3-$closed if and only if  it is not
a path and there exists a labeling of its vertices such that
$d(i,i+1)\leq 2$ for each $i<n$ (see Theorem \ref{3-closed trees}).
The class of $3-$closed trees and the number of elements of the
reduced  Gr\"{o}bner basis of $J_T$ for a $3-$closed labeling is
also studied by means of the   bipartite graph  $G^*$ attached to a
simple graph $G$ corresponding to the generators of $J_G$ (see
Definition \ref{bipartite}, Theorem \ref{3-closed class} and
Corollary \ref{number of reduced}).

In Section 4, we study a class of trees constructed from caterpillar
trees. We characterize the minimal primary decomposition of this
class of trees (see Theorem \ref{primary of caterpillar}). Also, we
show that they are $3-$closed. For some other trees constructed by
caterpillar trees we show $3-$closed property (see Theorem
\ref{caterpilar trees}). To prove Theorem \ref{caterpilar trees}, we
need an algorithm to give a $3-$closed labeling to the vertices of a
caterpillar tree such that $1$ is assigned to an arbitrary vertex.
This is provided in Algorithm 1 presented in the Appendix section.

\vspace{2mm}

 In the following, we review some definitions and results from
\cite{HHKR} which we need in the next sections.

\begin{Definition}
 Let $G$ be a simple graph on $[n]$, and let $i$ and $j$ be two
vertices of G with $i < j$. A path $i = i_0, i_1,\ldots,
i_r = j$ from $i$ to $j$ is called {\it admissible}, if\\
(i) $i_k\neq i_\ell$ for $k\neq \ell$;\\
 (ii) for each $k = 1,\ldots
, r-1$ one has either $i_k < i $ or $i_k
> j$;\\
 (iii) for any proper subset $\{ j_1,\ldots , j_s\}$ of $\{i_1,\ldots,
i_{r-1}\}$, the sequence $\{i, j_1,\ldots, j_s, j\}$ is not a path.
\end{Definition}

 Given an admissible path $\pi: i = i_0, i_1, \ldots , i_r = j$ from $i$ to $j$,
  where $i < j$, we associate the monomial

  $$u_\pi=(\prod_{i_k>j}x_{i_k})(\prod_{i_\ell<i}y_{i_\ell}).$$
  By \cite[Chapter 2, Proposition 6]{CLO}, the reduced Gr\"{o}bner basis of
  $J_G$ with respect to $\prec$ is unique. We have:

\begin{Theorem}\label{reduced grobner basis}\cite[Theorem 2.1]{HHKR}\\
Let $G$ be a simple graph on $[n]$. Then the set of binomials
$$\mathcal{G} =\bigcup_{i<j}\{u_\pi f_{i j} : \ \pi \ {\text {is an
admissible path from}}\  i \ {\text {to}} \ j \}$$ is the reduced
Gr\"{o}bner basis of $J_G$.
\end{Theorem}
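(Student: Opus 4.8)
The plan is to show directly that $\mathcal{G}$ is a reduced Gr\"obner basis of $J_G$ for $\prec$; since the reduced Gr\"obner basis is unique (as just recalled), this identifies $\mathcal{G}$ exactly as the statement claims. I would organize everything around the two Pl\"ucker-type identities
$$y_c f_{ab}-y_b f_{ac}+y_a f_{bc}=0,\qquad x_c f_{ab}-x_b f_{ac}+x_a f_{bc}=0,$$
valid for any three indices $a,b,c$, which are the only algebraic input needed. First I record leading terms: for $i<j$ the two monomials of $f_{ij}$ are $x_iy_j$ and $x_jy_i$, and $x_i\succ x_j$ forces $\ini_\prec(f_{ij})=x_iy_j$; hence for an admissible path $\pi$ from $i$ to $j$ the leading term of $u_\pi f_{ij}$ is the squarefree monomial $\ini_\prec(u_\pi f_{ij})=u_\pi x_i y_j$ (the indices $i_k>j$ and $i_\ell<i$ occurring in $u_\pi$ are all distinct from $i$ and $j$). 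Squarefreeness of every leading term will simplify the later analysis.

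Next I would prove membership, $\mathcal{G}\subseteq J_G$, that is $u_\pi f_{ij}\in J_G$ for every admissible path, by induction on the length $r$; only conditions (i) and (ii) are needed here. For $r=1$ the path is an edge. For $r\geq2$, if $\pi$ has an intermediate vertex above $j$, let $M=i_p$ be the global maximum of the path (so $M>j$); its two path-neighbours $a,b$ satisfy $a,b<M$, the edges $\{a,M\}$ and $\{M,b\}$ lie in $G$, and the $x$-identity applied to $\{a,b,M\}$ expresses $x_M f_{ab}$ as a combination of $f_{aM}$ and $f_{bM}$, whence $x_M f_{ab}\in J_G$. Deleting $M$ and joining $a$ to $b$ yields a shorter path $\pi''$ from $i$ to $j$ still satisfying (i) and (ii), now in the augmented graph $G'=G\cup\{\{a,b\}\}$, with $u_\pi=x_M\,u_{\pi''}$; by the inductive hypothesis $u_{\pi''}f_{ij}\in J_{G'}=J_G+(f_{ab})$, and multiplying by $x_M$ together with $x_M f_{ab}\in J_G$ gives $u_\pi f_{ij}\in J_G$. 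If instead all intermediate vertices lie below $i$, the same step with the global minimum and the $y$-identity applies, completing the induction.

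The main obstacle is the Gr\"obner property, which I would establish via Buchberger's criterion: every S-polynomial $S(g,g')$ with $g=u_\pi f_{ij}$ and $g'=u_{\pi'}f_{i'j'}$ must reduce to $0$ modulo $\mathcal{G}$. Because all leading terms are squarefree monomials, pairs with coprime leading terms reduce automatically, so the whole difficulty concentrates in the overlapping case, where the two admissible paths share variables. Here I would rewrite $S(g,g')$ by means of the two Pl\"ucker identities and re-express the resulting binomials through path binomials attached to admissible paths assembled from pieces of $\pi$ and $\pi'$; condition (iii), the minimality requirement forbidding proper sub-paths, is precisely what guarantees that each such binomial already belongs to $\mathcal{G}$ and that the overlaps telescope, leaving remainder $0$. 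This combinatorial bookkeeping of interacting admissible paths is the delicate, case-heavy core of the argument, and it is where I expect essentially all of the effort to lie.

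Finally I would verify that $\mathcal{G}$ is reduced. Each $u_\pi f_{ij}$ has leading coefficient $1$, so it remains to show that for distinct elements $\ini_\prec(u_\pi f_{ij})=u_\pi x_iy_j$ divides no monomial of $u_{\pi'}f_{i'j'}$, whose monomials are the squarefree terms $u_{\pi'}x_{i'}y_{j'}$ and $u_{\pi'}x_{j'}y_{i'}$. If such a division held, reading off which vertices supply the variables would exhibit the vertices of $\pi$ inside those of $\pi'$ as a proper sub-path joining $i'$ to $j'$, contradicting condition (iii) for $\pi'$. Hence no leading term divides a term of another element, $\mathcal{G}$ is reduced, and by uniqueness it is the reduced Gr\"obner basis of $J_G$.
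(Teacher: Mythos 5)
Your overall skeleton --- identifying the leading terms $u_\pi x_i y_j$, proving membership $u_\pi f_{ij}\in J_G$ by induction on path length via the two Pl\"ucker relations, invoking Buchberger's criterion, and finishing with a reducedness check --- is exactly the strategy of the original proof in \cite{HHKR} (note the paper under review does not prove this theorem at all; it quotes it). Your membership induction is correct as written: extracting the global maximum (or minimum) of the path, observing $x_M f_{ab}\in (f_{aM},f_{bM})\subseteq J_G$, and passing to the augmented graph $G'=G\cup\{\{a,b\}\}$ is a clean version of that step, and you rightly note it uses only conditions (i) and (ii). The reducedness paragraph is essentially right too, though it silently needs the small verification that divisibility between the squarefree monomials forces $i=i'$ and $j=j'$ when leading term divides leading term, and is outright impossible against the trailing term $u_{\pi'}x_{j'}y_{i'}$ (one gets $i\geq j'>i'\geq j>i$); your phrase ``a proper sub-path joining $i'$ to $j'$'' presupposes $i=i'$, $j=j'$, which must be derived, not assumed.

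The genuine gap is at the heart of the argument: you never carry out the S-pair reductions, and you say so yourself (``where I expect essentially all of the effort to lie''). That case analysis is the mathematical content of the theorem, not routine bookkeeping. Moreover, the one concrete claim you make about it is wrong as stated: when the leading terms of $u_\pi f_{ij}$ and $u_{\pi'}f_{i'j'}$ overlap and one splices pieces of $\pi$ and $\pi'$, the resulting walks generally satisfy (i)--(ii) but \emph{not} the minimality condition (iii), so condition (iii) does not guarantee ``that each such binomial already belongs to $\mathcal{G}$.'' What is actually needed --- and what \cite{HHKR} supplies --- is the auxiliary fact that every path from $k$ to $l$ satisfying (i)--(ii) contains an admissible path from $k$ to $l$ whose associated monomial divides that of the original path, so that at each step of the division the leading monomial of the partial remainder remains divisible by $\LT_\prec(g)$ for some $g\in\mathcal{G}$, together with a several-case analysis of how the endpoint pairs $\{i,j\}$, $\{i',j'\}$ and the vertex sets of $\pi$, $\pi'$ interlace. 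Until that lemma is stated and the overlap cases are actually worked through, what you have is a correct frame around a hole where the proof's main content should be.
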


\vspace{2mm}

By \cite[Theorem 3.2]{HHKR}, we can  write $J_G$ as a finite
intersection of prime ideals.  In fact, corresponding to each subset
$S\subset[n]$ we have the prime ideal
\[
P_S(G)=(\bigcup_{i\in
S}\{x_i,y_i\})+J_{\tilde{G}_1}+\cdots+J_{\tilde{G}_{c(S)}},
\]
where $G_1,\ldots,G_{c(S)}$ are the connected components of the induced subgraph on the vertices $[n]\setminus S$,
and $\tilde{G}_\ell$ is the complete graph on the vertices of $G_\ell$ for all $\ell$.
Then
\begin{equation}\label{PD}
J_G= \bigcap_{S\subset[n]}P_S(G).
\end{equation}
Moreover, $\dim R/J_G = \max\{(n - |S|) + c(S) : S \subset [n]\}$
and hence  $\dim R/J_G\geq  n + c(G)$, where $c(G)$ is the number of
the connected components of $G$.  Equation (\ref{PD}) also shows
that $J_G$ is a radical ideal. If $G$ is a connected graph then
$P_\emptyset (G)=J_{K_n}$ is a minimal prime ideal of $J_G$. Note
that if $S$ is an arbitrary subset of $[n]$ the prime ideal $P_S(G)$
is not necessary a minimal prime ideal of $J_G$. The next lemma
detects the minimal prime ideals of $J_G$ when $G$ is a connected
graph. Note that for $S\subset [n]$, by $c(S)$ we mean
$c(G_{[n]\setminus S})$.

\begin{Lemma}\label{minimal prime}\cite[Corollary 3.9]{HHKR}

Let $G$ be a connected graph on the vertex set $[n]$ and $S\subset
[n]$. Then $P_S(G)$ is a minimal prime ideal of $J_G$ if and only if
$S = \emptyset$, or $S\neq \emptyset$ and for each $i\in S$ one has
$c(S \backslash \{i\}) < c(S)$.
\end{Lemma}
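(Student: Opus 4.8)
The plan is to work entirely from the primary decomposition (\ref{PD}), namely $J_G=\bigcap_{S\subseteq[n]}P_S(G)$ with each $P_S(G)$ prime. Since this is a \emph{finite} intersection of primes, every prime containing $J_G$ contains some $P_S(G)$; hence the minimal primes of $J_G$ are exactly the inclusion-minimal members of the family $\{P_S(G):S\subseteq[n]\}$. The whole problem therefore reduces to deciding when $P_T(G)\subseteq P_S(G)$, and I would first record two facts about which generators lie in $P_S(G)$. First, since the variety $V(P_S(G))$ is cut out by $x_i=y_i=0$ for $i\in S$ together with independent rank-$\le 1$ conditions on the columns of each connected component of $G_{[n]\setminus S}$, one checks that $x_i\in P_S(G)$ (equivalently $y_i\in P_S(G)$) precisely when $i\in S$. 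Consequently $P_T(G)\subseteq P_S(G)$ forces $T\subseteq S$, and the $P_S(G)$ are pairwise distinct.

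Second --- and this is the technical heart --- for $u,v\in[n]\setminus S$ I claim $f_{uv}\in P_S(G)$ if and only if $u$ and $v$ lie in the same connected component of $G_{[n]\setminus S}$. The ``if'' direction is immediate, since $f_{uv}$ then belongs to some $J_{\tilde G_\ell}$. For the ``only if'' direction I expect the main obstacle: one must show $f_{uv}\notin P_S(G)$ when $u,v$ sit in different components. I would do this geometrically, exhibiting a point of $V(P_S(G))$ at which $f_{uv}$ does not vanish. Because the rank-$\le 1$ conditions on distinct components are imposed on disjoint sets of columns, they are independent; so I may set the $u$-column to $(x_u,y_u)=(1,0)$, the $v$-column to $(x_v,y_v)=(0,1)$, and all remaining columns to $0$ (each such assignment is a rank-$\le 1$ matrix, and $x_i=y_i=0$ on $S$). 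At this point $f_{uv}=1\neq 0$, proving $f_{uv}\notin P_S(G)$.

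Combining these, for $T\subseteq S$ the inclusion $P_T(G)\subseteq P_S(G)$ holds iff $J_{\tilde H}\subseteq P_S(G)$ for every component $H$ of $G_{[n]\setminus T}$ (the linear generators $x_j,y_j$ with $j\in T$ already lie in $P_S(G)$ since $T\subseteq S$). By the two facts above this is equivalent to the statement that no component of $G_{[n]\setminus T}$ meets two distinct components of $G_{[n]\setminus S}$ --- i.e. adding the vertices $S\setminus T$ back does not \emph{merge} any two of $G_1,\ldots,G_{c(S)}$. I would then translate the numerical condition: writing $d$ for the number of components of $G_{[n]\setminus S}$ containing a neighbour of $i$, adding $i$ back yields $c(S\setminus\{i\})=c(S)-d+1$, so $c(S\setminus\{i\})<c(S)$ exactly when $d\ge 2$, that is, when $i$ is adjacent to at least two distinct components of $G_{[n]\setminus S}$.

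Finally I would assemble both directions from this merging criterion. If the stated condition fails at some $i\in S$ (so $d\le 1$), then taking $T=S\setminus\{i\}$ the single returned vertex $i$ attaches to at most one component and merges nothing, whence $P_T(G)\subseteq P_S(G)$, strictly since $x_i\in P_S(G)\setminus P_T(G)$; thus $P_S(G)$ is not minimal. Conversely, if the condition holds for every $i\in S$ and $T\subsetneq S$, pick $i\in S\setminus T$; it is adjacent to two distinct components $G_{\ell_1},G_{\ell_2}$, and since all the neighbours involved survive in $G_{[n]\setminus T}$, the vertex $i$ joins $G_{\ell_1}$ and $G_{\ell_2}$ into a single component of $G_{[n]\setminus T}$. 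By the merging criterion $P_T(G)\not\subseteq P_S(G)$, so no proper $P_T(G)$ lies below $P_S(G)$ and $P_S(G)$ is minimal; the case $S=\emptyset$ (where $P_\emptyset(G)=J_{K_n}$) is minimal trivially, since no $T\subsetneq\emptyset$ exists. This yields the claimed equivalence.
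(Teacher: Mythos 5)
There is nothing in the paper to compare your proof against: the paper does not prove this lemma at all, but imports it verbatim as \cite[Corollary 3.9]{HHKR}. Judged on its own, your argument is correct, and it is self-contained given the two facts the paper does record, namely that each $P_S(G)$ is prime and that $J_G=\bigcap_{S\subset[n]}P_S(G)$ is a \emph{finite} intersection. The reduction of ``minimal prime of $J_G$'' to ``inclusion-minimal member of the family $\{P_S(G)\}$'' is the right first move; the two membership criteria ($x_i\in P_S(G)$ iff $i\in S$, and, for $u,v\notin S$, $f_{uv}\in P_S(G)$ iff $u,v$ lie in one component of $G_{[n]\setminus S}$) are correctly proved by evaluating at explicit $0$--$1$ points, which is valid over every field because only evaluation homomorphisms, not the Nullstellensatz, are used; and the count $c(S\setminus\{i\})=c(S)-d+1$ correctly converts the no-merging criterion into the stated inequality $c(S\setminus\{i\})<c(S)\Leftrightarrow d\geq 2$. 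One step needs a one-line patch: when you claim that, for $T\subseteq S$, the inclusion $J_{\tilde H}\subseteq P_S(G)$ (with $H$ a component of $G_{[n]\setminus T}$) is ``by the two facts above'' equivalent to $H$ meeting at most one component of $G_{[n]\setminus S}$, the generators $f_{uv}$ of $J_{\tilde H}$ in which $u$ or $v$ lies in $S\setminus T$ are not covered by your second fact, which you stated only for $u,v\in[n]\setminus S$. The fix is trivial --- if $u\in S$ then $f_{uv}=x_uy_v-x_vy_u\in(x_u,y_u)\subseteq P_S(G)$ --- and with it the merging criterion, and hence both directions of your final assembly (including the degenerate case $S=\emptyset$), are complete. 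Note also that connectedness of $G$ is never actually used in your argument; it enters only in side remarks such as $P_\emptyset(G)=J_{K_n}$, so your proof in fact establishes the criterion for arbitrary graphs.
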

%%%%%%%%%%%%%%%%%%%%%%%%%%%%%%%%%%%%%%%%
%%%%%%%%%%%%%%%%%%%%%%%%%%%%%%%%%%%%%%%%
%%%%%%%%%%%%%%%%%%%%%%%%%%%%%%%%%%%%%%%%%%%
\section{$m-$closed graphs}

In this section we study the reduced  Gr\"{o}bner basis of $J_G$. As
Theorem \ref{reduced grobner basis} shows the reduced Gr\"{o}bner
basis depends on the labeling of the vertices of $G$. We recall that
a labeling of $G$ is a bijection $V(G)\simeq [n]=\{1,\ldots ,n\}$,
and given a labeling, we typically assume $V(G)=[n]$.

The graph $G$ is called closed with respect to the given labeling if
$J_G$ has a quadratic Gr\"{o}bner basis with respect to $\prec$. By
\cite[Theorem 1.1]{HHKR} we have:

\begin{Theorem}\label{Closed condition}

Let $G$ be a simple graph on the vertex set $[n]$.  $G$ is closed if
and only if  the following condition is satisfied:

\vspace{2mm}

For every two edges $\{i, j\}$ and $\{k, \ell\}$ in $E(G)$ with $i <
j$ and $k < \ell$, one has $\{ j, \ell\}\in E(G)$ if $i = k$, and
$\{i,k\}\in E(G)$ if $j = \ell$.

\end{Theorem}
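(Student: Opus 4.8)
The plan is to reduce the statement to a combinatorial property of admissible paths via Theorem \ref{reduced grobner basis}. Since each $f_{ij}$ has degree $2$ and, for an admissible path $\pi\colon i=i_0,i_1,\ldots,i_r=j$, condition (ii) forces every interior vertex $i_k$ ($1\le k\le r-1$) to satisfy $i_k<i$ or $i_k>j$, the monomial $u_\pi$ contributes exactly one variable for each interior vertex; hence $\deg u_\pi=r-1$, and $u_\pi=1$ precisely when $r=1$, i.e.\ when $\pi$ is the edge $\{i,j\}$ itself. Therefore the reduced Gr\"obner basis $\mathcal G$ of Theorem \ref{reduced grobner basis} consists of quadrics if and only if every admissible path of $G$ has length one, and the whole theorem becomes: the stated condition holds iff every admissible path of $G$ is an edge. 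I would prove the two implications by contraposition.

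For the implication ``closed $\Rightarrow$ condition'' I would assume the condition fails and produce an admissible path of length $2$. If $\{i,j\},\{i,\ell\}\in E(G)$ with $i<j<\ell$ but $\{j,\ell\}\notin E(G)$, then $j,i,\ell$ is a path from $j$ to $\ell$ whose single interior vertex $i$ satisfies $i<j$, so (ii) holds, and (iii) holds because the only subsequence to check, $j,\ell$, is not a path since $\{j,\ell\}\notin E(G)$; thus it is admissible of length $2$. Symmetrically, if $\{i,j\},\{k,j\}\in E(G)$ with $i<k<j$ but $\{i,k\}\notin E(G)$, then $i,j,k$ is an admissible path from $i$ to $k$ of length $2$. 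In either case $\mathcal G$ contains an element of degree $3$, so $G$ is not closed.

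For the implication ``condition $\Rightarrow$ closed'' I would suppose the condition holds and that, for contradiction, there is an admissible path $\pi\colon i=i_0,\ldots,i_r=j$ with $r\ge 2$. The crucial observation is that the integer sequence $i_0,\ldots,i_r$ cannot be monotone: its endpoints satisfy $i_0=i<j=i_r$, while every interior term lies outside the interval $[i,j]$, which rules out both monotone increasing and monotone decreasing. Hence the sequence has an interior strict extremum, i.e.\ some $k$ with $1\le k\le r-1$ for which either $i_{k-1}<i_k>i_{k+1}$ (a peak) or $i_{k-1}>i_k<i_{k+1}$ (a valley). At a peak the edges $\{i_{k-1},i_k\}$ and $\{i_{k+1},i_k\}$ share their larger endpoint $i_k$, and at a valley they share their smaller endpoint $i_k$; in both cases the stated condition yields the chord $\{i_{k-1},i_{k+1}\}\in E(G)$.

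With this chord in hand, deleting $i_k$ produces the path $i=i_0,\ldots,i_{k-1},i_{k+1},\ldots,i_r=j$ from $i$ to $j$ using the proper subset $\{i_1,\ldots,i_{r-1}\}\setminus\{i_k\}$ of the interior vertices, contradicting the minimality condition (iii) in the definition of an admissible path. Thus no admissible path of length $\ge 2$ exists, and $G$ is closed. The main obstacle is this last direction: one must locate three consecutive vertices whose middle one is a local extremum, so that the condition applies in the precise form ``common smaller endpoint $\Rightarrow$ larger endpoints adjacent, common larger endpoint $\Rightarrow$ smaller endpoints adjacent,'' and then check that the resulting shortcut genuinely violates (iii). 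The non-monotonicity argument, built on $i_0<i_r$ together with the fact that all interior vertices avoid $[i,j]$, is exactly what guarantees that such an extremum exists.
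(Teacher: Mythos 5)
Your proposal is correct, but note that the paper itself contains no proof of this statement: it is quoted verbatim as \cite[Theorem 1.1]{HHKR}, and the proof in that reference proceeds through Buchberger's criterion, showing directly that the S-polynomials of the quadratic generators $f_{ij}$ reduce to zero exactly when the stated edge condition holds. Your route is genuinely different: you take the admissible-path description of the reduced Gr\"obner basis (Theorem \ref{reduced grobner basis}) as a black box, observe that $\deg(u_\pi f_{ij})$ equals one plus the number of vertices of $\pi$, and thereby turn the theorem into the purely combinatorial assertion that the edge condition holds if and only if every admissible path is a single edge. Both directions of that assertion are handled correctly: the failure of the condition immediately yields an admissible path $j,i,\ell$ or $i,j,k$ of length $2$ (this is precisely the observation the paper itself makes, citing this theorem, in the paragraph on degree-$3$ elements of $\mathcal{G}$ in Section 2), and conversely your peak/valley argument --- non-monotonicity of $i_0,\ldots,i_r$ forced by $i_0<i_r$ together with condition (ii), then the edge condition producing the chord $\{i_{k-1},i_{k+1}\}$, then the shortcut violating condition (iii), including the boundary case $r=2$ where the ``shortcut'' is the edge $\{i,j\}$ itself --- is sound. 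What your approach buys is a Gr\"obner-computation-free argument that fits naturally into this paper's framework, where Theorem \ref{reduced grobner basis} is the basic tool and shortcut arguments of exactly this kind reappear (e.g.\ in the proofs of Theorems \ref{weakly closed} and \ref{3-closed trees}); what the original approach buys is independence from the full reduced Gr\"obner basis theorem, which is a strictly stronger result than the criterion being proved. Your derivation is non-circular only because \cite{HHKR} proves its Theorem 2.1 without appealing to its Theorem 1.1, a dependency worth acknowledging explicitly if this argument were to be included.
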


Let $G$ be a graph, we recall that the {\it clique complex} of $G$,
denoted  $\Delta(G)$, is the simplicial complex  on $[n]$ whose
faces are the cliques of $G$.
 The graph $G$ is closed if and only if
there exists a labeling of $G$ such that all facets of $\Delta(G)$
are intervals $[a,b]\subset[n]$ (see \cite[Theorem 2.2]{EHH}).
Closed graphs are studied in more details in \cite{CE,CR}.

%Connected closed graphs with Cohen-Macaulay binomial edge ideals are
%classified  as those closed
% graphs whose binomial edge ideals have a Cohen-Macaulay initial ideal, or equivalently there exist integers
% $1 = a_1 < a_2 < \cdots< a_r < a_{r+1} = n$ such that the facets of $\Delta(G)$ are $F_1,\ldots ,F_r$, where
%$F_i = [a_i,a_{i+1}]$ for $i = 1,\ldots,r$ (see \cite[Theorem
%3.1]{EHH}).

Following the definition of closed graph we introduced $m-$closed
graphs.

\begin{Definition}
 Let $m$ be a positive integer. We say
that a graph $G$ with vertex set $V (G) = \{v_1,\ldots, v_n\}$ is
$m-$closed, if its vertices can be labeled by $[n]$ such that for
this labeling  all the elements of $\mathcal{G}$ are of degree $\leq
m$, and $m$ is the least integer with this property for $G$.

Moreover, a labeling of the vertices of $G$ is called an $m-$closed
labeling if the reduced  Gr\"{o}bner basis of $J_G$ is in degree
$m$ and less than $m$ with respect to this labeling.
\end{Definition}

\vspace{2mm}

 By the above  definition a closed graph is a $2-$closed
graph. the cycle $C_4$ (cycle with $4$ vertices) is $3-$closed and
$C_5$ is $4-$closed.

By Theorem \ref{reduced grobner basis}, a graph $G$ is $m-$closed if
and only if, there exists a labeling for its vertices such that each
admissible path in $G$ has at most $m$ vertices  and
 in each labeling of the vertices, there exists an admissible path
of length $\ell$ where $\ell\geq m-1$.

\vspace{2mm}

%Let $G$ be a graph on the vertex set $V(G)=\{v_1,\ldots ,v_n\}$. If
%we label its vertices by the integers $1,2,\ldots ,n$

We recall that a bridge is an edge  whose removal from a graph
increases the number of components. If $e$ is a bridge of a
connected graph $G$, and $H_1$ and $H_2$ are the connected
components of $G\setminus e$, we write $G\setminus e=H_1\sqcup H_2$.

In the following we  find some information about $m-$closed graphs.

\begin{Proposition}\label{bridge}
(i) Let $G$ be a graph and $\ell$ be the length of the longest
induced path of $G$. Then $G$ is $m-$closed for some $m\leq \ell+1$.

(ii) Let $G$ be a graph and $H$ be an $\ell-$closed induced subgraph
of $G$. Then $G$ is $m-$closed for some $m\geq \ell$.

(iii) Let $e$ be a bridge of a connected graph $G$ and $G\setminus
e=H_1\sqcup H_2$. If $H_1$ is $m-$closed and $H_2$ is $\ell-$closed
($\ell\geq m$), then $G$ is $\ell-$closed provided that there exists
an $m-$closed labeling  of $H_1$ in which $1$ is the label of the
end point of $e$ in $H_1$ and   there exists an $\ell-$closed
labeling of $H_2$ in which $1$ is the label of  the endpoint of $e$
in $H_2$.
\end{Proposition}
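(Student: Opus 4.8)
The plan rests on two structural facts that I would establish first. \textbf{Observation A:} every admissible path is an induced path. Indeed, if $\pi\colon i=i_0,i_1,\dots,i_r=j$ had a chord $\{i_a,i_b\}$ with $b\ge a+2$, then $i_0,\dots,i_a,i_b,\dots,i_r$ would be a path from $i$ to $j$ whose interior vertices form a proper subset of $\{i_1,\dots,i_{r-1}\}$, contradicting condition (iii) of admissibility. \textbf{Observation B:} if $H$ is an \emph{induced} subgraph of $G$ and $H$ carries the order restricted from a labeling of $G$, then a path using only vertices of $H$ is admissible in $G$ if and only if it is admissible in $H$. Conditions (i) and (ii) depend only on the restricted order, while condition (iii) only tests subsets of the path's own vertices, which lie in $H$; since $H$ is induced, a sequence of $H$-vertices is a path in $G$ exactly when it is a path in $H$, so the "no shortcut" condition is identical in both graphs. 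Throughout I use the characterization recorded after the definition of $m$-closed: $G$ is $m$-closed precisely when $m=\min_L\max_\pi(\#\text{vertices of }\pi)$, the minimum over labelings $L$ of the maximal number of vertices on an admissible path.

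For (i), Observation A shows that for every labeling each admissible path is an induced path, hence has at most $\ell+1$ vertices, where $\ell$ is the number of edges of the longest induced path. Thus $\max_\pi(\#\text{vertices})\le\ell+1$ for every labeling, giving $m\le\ell+1$. For (ii), fix an arbitrary labeling $L$ of $G$ and restrict it to $H$. Since $H$ is $\ell$-closed and $\ell$ is the minimum over $H$-labelings of the maximal path length, this restricted order realizes some admissible path of $H$ on at least $\ell$ vertices; by Observation B that path is admissible in $G$, so the maximal admissible path of $G$ under $L$ has at least $\ell$ vertices. As $L$ was arbitrary, $m\ge\ell$.

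For (iii), I would construct one explicit good labeling of $G$ and obtain the matching lower bound from part (ii). Let $a,b$ be the endpoints of $e$ in $H_1,H_2$. Using the hypotheses, take the $m$-closed labeling of $H_1$ with $a$ least and the $\ell$-closed labeling of $H_2$ with $b$ least, and combine them so that $H_1$ occupies the labels $1,\dots,n_1$ (with $a\mapsto 1$) and $H_2$ the labels $n_1+1,\dots,n_1+n_2$ (with $b\mapsto n_1+1$). Admissible paths of $G$ lying inside $V(H_1)$ or inside $V(H_2)$ are, by Observation B, admissible in $H_1$ respectively $H_2$, hence have at most $m\le\ell$ respectively $\ell$ vertices. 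The crucial case is a path meeting both parts. Since $e$ is the only edge joining them, such a path crosses $e$ once and so contains the consecutive pair $a,b$, with endpoints $i\in H_1$ and $j\in H_2$; as every $H_1$-label lies below every $H_2$-label, $i<j$. If $i\ne a$ then $a$ is interior and (ii) forces $a<i$ or $a>j$, both impossible since $a=1$ is the global minimum and $a<j$; hence $i=a$. Symmetrically, if $j\ne b$ then $b$ is interior and (ii) forces $b<i=1$ or $b>j$, both impossible since $b=\min V(H_2)\le j$; hence $j=b$. So the only admissible path crossing $e$ is $e$ itself, on $2$ vertices. Consequently the largest admissible path of $G$ has $\max(m,\ell,2)=\ell$ vertices, so $G$ is $m'$-closed for some $m'\le\ell$; combined with $m'\ge\ell$ from part (ii) applied to the induced $\ell$-closed subgraph $H_2$, this yields $m'=\ell$, i.e.\ $G$ is $\ell$-closed.

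The delicate points are the equivalence of condition (iii) across $G$ and an induced subgraph in Observation B, and, in (iii), the claim that the bridge endpoints cannot be interior vertices of a crossing admissible path. The latter is the main obstacle and is exactly where the hypothesis that $a$ and $b$ are the least vertices of their labelings is used; once it is in place, the rest is bookkeeping with the admissibility conditions.
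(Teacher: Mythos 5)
Your Observations A and B and your arguments for parts (i) and (ii) are correct, and in fact more detailed than the paper, which dismisses (i)--(ii) as immediate from the definitions. The genuine problem is in part (iii): the labeling you construct does not work, and the step where you rule out long crossing paths contains an error of direction. You place $a$ at the global minimum label $1$. For a crossing admissible path with endpoints $i\in V(H_1)$, $j\in V(H_2)$ and $i\neq a$, condition (ii) of admissibility asks that the interior vertex $a$ satisfy $a<i$ or $a>j$; you claim both are impossible, but $a<i$ holds \emph{automatically}, precisely because $a=1$ is the global minimum. So there is no contradiction, and such paths can be admissible --- and arbitrarily long. Concretely, let $H_1$ be the path $a,u_1,\ldots,u_k$ with its closed labeling $a=1$, $u_t=t+1$ (so $m=2$), and let $H_2$ be the single edge $b,w$ with $b=1$, $w=2$ (so $\ell=2$). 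Your combined labeling is $a\mapsto 1$, $u_t\mapsto t+1$, $b\mapsto k+2$, $w\mapsto k+3$, and then $u_k,u_{k-1},\ldots,u_1,a,b$ is an admissible path on $k+2$ vertices (all interior labels are smaller than $u_k=k+1$, and there are no chords since $G$ is a tree), even though $G$ is a path and hence $2$-closed. So your labeling produces Gr\"obner basis elements of arbitrarily large degree in a situation where the proposition promises degree at most $2$.

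The fix is exactly what the paper does: \emph{reverse} the $m$-closed labeling of $H_1$, assigning the vertex labeled $i$ the new label $n_1-i+1$, so that $a$ receives the \emph{largest} label $n_1$ of $H_1$, and shift $H_2$ so that $b=n_1+1$; the bridge becomes $\{n_1,n_1+1\}$. Reversing the total order preserves admissibility (all three conditions are symmetric under order reversal), so inside $H_1$ every admissible path still has at most $m$ vertices. With this labeling your own case analysis goes through verbatim, but now with true contradictions: if $i\neq a$ then $i<n_1=a<n_1+1\leq j$, so the interior vertex $a$ violates condition (ii); if $j\neq b$ then $i=n_1<b=n_1+1<j$, so $b$ violates condition (ii). Hence the only crossing admissible path is the edge $e$ itself, every admissible path has at most $\max(m,\ell,2)=\ell$ vertices, and your lower bound $m'\geq\ell$ from part (ii) (which the paper leaves implicit) completes the proof.
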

\begin{proof}
Part (i) and (ii) are followed from the definition of an admissible
path and $m-$closed property.

For part (iii), assume that $H_1$ is an $m-$closed graph on $[n_1]$,
$H_2$ is an $\ell-$closed  graph on $[n_2]$ and $1$ is the label of
the end points of $e$ in each $H_i$ ($i=1,2$). We give a labeling to
$G$ by  assigning  to each vertex $i$ of $H_1$ the new label
$n_1-i+1$ and to each vertex $i$ of $H_2$ the new label $n_1+i$. So,
by this labeling $e=\{n_1,n_1+1\}$. It is easy to see that the graph
$G=H_1\cup \{n_1,n_1+1\}\cup H_2$ is an $\ell-$closed graph on
$[n_1+n_2]$.
\end{proof}

 A natural question to ask is that if the reduced  Gr\"{o}bner basis of
$J_G$ has an element of degree $m$, can we conclude that it also has
an element of degree $\ell$ for each $1<\ell<m$. This is not true
in general, as the following example shows:

\begin{Example}
 Let $G$ be the path on $[5]$ with
$E(G)=\{\{1,4\},\{3,4\},\{3,5\},\{2,5\}\}$. Then $\mathcal{G}$ has
an element of degree $5$ while it doesn't have any element of degree
$4$.
\end{Example}

 For a simple graph $G$
on $[n]$, and $m\geq 3$, if the reduced
 Gr\"{o}bner basis of $J_G$ has an element of degree $m$, then it has an
element of degree $3$. In fact, $G$ is not closed and  by
 Theorem \ref{Closed condition}, there exist two edges $\{i,j\}$ and $\{i,\ell\}$ in $E(G)$  with $i<j$, $i<\ell$ and $\{j,\ell\}\notin E(G)$,
 or  there exist two edges $\{i,j\}$ and $\{k,j\}$ in $E(G)$  with $i<j$, $k<j$ and $\{i,k\}\notin
 E(G)$. So $j,i,\ell$ or $i,j,k$  is an admissible path of length
$2$. So, $\mathcal{G}$ has an element of degree $3$.

  Therefore,
if $G$ is an $m-$closed graph, in each labeling of its vertices,
there exists an admissible path of length $2$. But as we have seen
in the above example, we can not extend Theorem \ref{Closed
condition} to check if a labeling is a $3-$closed labeling or not.

%\begin{proof}
%Let  $P: i_1, i_2,\ldots ,i_m$ be an admissible path in $G$. So
%$i_1<i_m$, and $\forall j\in\{2,\ldots ,m-1\}, i_j<i_1\ {\text or}\
%i_j>i_m$.$3-$
%
%Assume that $i_2<i_1$. If $\forall \ell<m-2,
%i_{\ell},i_{\ell+1},i_{\ell+2}$ is not an admissible path, then
%$i_{m-1}<i_{m-2}<\cdots <i_2<i_1<i_m$. So $i_{m-2},i_{m-1},i_m$ is
%an admissible path and the proof is complete.
%
%If $i_2>i_m$, by a similar argument we get the result.
%\end{proof}
We recall that if $I$ is an ideal of $R$, the leading term ideal of
$I$ with respect to $\prec$ is the monomial ideal of $R$ which is
generated by $(LT_{\prec}(f)\ | \ 0\neq f\in I)$ where
$LT_{\prec}(f)$ is the leading term of $f$ with respect to $\prec$ .
We write $LT_{\prec}(I)$ for the leading term ideal of $I$.

If $G$ is a graph, it is clear that for any arbitrary labeling of
the vertices of $G$, $|\mathcal{G}|=\mu(LT_\prec(J_G))\geq \mu(J_G)$
($\mu(I)$ is the minimal number of homogeneous generators of I).
Moreover, $G$ is a closed graph if and only if there exists a
labeling in which $\mu(LT(J_G))= \mu(J_G)$. So If $G$ is a
non-closed graph on $[n]$ and $\mu(LT_\prec(J_G))= \mu(J_G)+1,$ then
$G$ is $3$-closed.

\vspace{2mm}

It is well known by \cite[Proposition 1.2]{HHKR}, that a closed
graph is chordal. In the following we are going to find a
generalization of this necessary condition for $m-$closed property.
For this we need the following theorem about cycles:

\begin{Theorem}\label{cycle}
Let $C_n$ be the cycle on $n\geq 4$ vertices. Then $C_n$ is
$m-$closed where \\
$m=\left\{
\begin{array}{ll}
    \frac{n}{2}+1 & \hbox{$n$ is even;} \\
    \frac{n+1}{2}+1 & \hbox{$n$ is odd.} \\
\end{array}%
\right.$
\end{Theorem}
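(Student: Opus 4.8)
The plan is to reduce the statement to a combinatorial estimate on arcs of the cycle and then prove matching bounds. Note first that the two cases of the formula are unified by $m=\lceil n/2\rceil+1$, and that by the shape of $u_\pi$ the degree of $u_\pi f_{ij}$ equals the number of vertices of the path $\pi$; so I must show that the minimum over labelings of the longest admissible path is $m$ vertices. I would begin by describing admissible paths in $C_n$. Since $C_n$ is chordless, the only paths between $i<j$ are the two arcs joining them. An edge $\{i,j\}$ is its own unique admissible path (any longer arc is shortcut by the edge, violating (iii)); and for a non-edge $\{i,j\}$ an arc is admissible exactly when none of its interior labels lies strictly between $i$ and $j$, condition (iii) being automatic because deleting an interior vertex of an arc breaks the path. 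In particular, if $j=i+1$ and $\{i,i+1\}\notin E(C_n)$ then \emph{both} arcs are admissible. Finally, since the two arcs between any $i<j$ share only their endpoints and cover all $n$ vertices, the longer one has at least $\lceil n/2\rceil+1=m$ vertices.

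For the lower bound I would show every labeling has an admissible path on at least $m$ vertices. If some consecutive pair $\{i,i+1\}$ is non-adjacent on the cycle, both arcs between them are admissible and the longer one already has $\ge m$ vertices. Otherwise all of $\{1,2\},\ldots,\{n-1,n\}$ are edges; these $n-1$ distinct edges form a Hamiltonian path, so the cycle must be the natural one $1,2,\ldots,n$, and then the arc $1,n,n-1,\ldots,3$ is admissible (all interior labels exceed $3$) with $n-1\ge m$ vertices. Either way the bound holds.

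For the upper bound I would exhibit one good labeling and show that no admissible arc has more than $m$ vertices. Set $d=\lfloor n/2\rfloor$ and place the labels $1,2,\ldots,n$ by walking around $C_n$ with successive steps $+d,+1,-d,+1,+d,+1,-d,\ldots$, i.e. consecutive labels $\{2t-1,2t\}$ are put antipodally (step $\pm d$, alternating sign) and $\{2t,2t+1\}$ adjacently (step $+1$). Call an arc \emph{long} if it has more than $m$ vertices; then its complementary arc has length $<d$, so it cannot contain a pair of antipodally placed labels. Hence whenever the intermediate labels $i+1,\ldots,j-1$ contain an antipodal consecutive pair $\{2t-1,2t\}$, every long arc between $i$ and $j$ meets one of them and is not admissible. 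A short check shows such a pair is present whenever $j\ge i+4$, and also when $j=i+3$ with $i$ even. The remaining gaps are disposed of directly: for $j=i+1$ the arcs are either adjacent (the edge kills the long arc) or antipodal (no long arc exists); for $j=i+2$ the single intermediate label is antipodal to one endpoint, forcing the admissible arc to be the short one; and for $j=i+3$ with $i$ odd the two cancelling antipodal steps make $\{i,i+3\}$ an edge. Thus no admissible arc is long, so every element of $\mathcal{G}$ has degree at most $m$.

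The main obstacle is the upper bound, and it is entirely concentrated in two bookkeeping points rather than in any deep idea: first, checking that the prescribed walk is a genuine bijection onto $V(C_n)$ for both parities of $n$ (the wrap-around differs when $n$ is odd, where $2d=n-1$), and second, settling the small-gap cases $j-i\le 3$, in particular verifying that the alternating signs of the antipodal steps really do turn $\{i,i+3\}$ into an edge precisely in the case where the intermediate labels carry no antipodal pair. Once the labeling is pinned down and these boundary cases are checked, the antipodal-versus-short-arc observation handles all larger gaps uniformly.
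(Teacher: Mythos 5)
Your lower bound is correct and is essentially the paper's own argument: either some consecutive pair $\{i,i+1\}$ is non-adjacent on the cycle, in which case both arcs between them are admissible and the longer one has at least $\lceil n/2\rceil+1=m$ vertices, or all consecutive pairs are adjacent, in which case the cycle is the natural one and $1,n,n-1,\ldots,3$ is an admissible path on $n-1\geq m$ vertices. Your upper-bound labeling for \emph{even} $n$ also coincides with the paper's construction (antipodal placement of each pair $\{2t-1,2t\}$, unit steps for $\{2t,2t+1\}$), and your observation that a long arc's complement is too short to contain an antipodally placed pair is a clean way to organize the non-admissibility check.

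The genuine gap is in the odd case: the walk with steps $+d,+1,-d,+1,\ldots$, $d=\lfloor n/2\rfloor$, is \emph{not} a bijection onto $V(C_n)$ when $n\equiv 1\pmod 4$. For $n=5$ ($d=2$) the visited positions are $0,2,3,1,2$, so labels $2$ and $5$ land on the same vertex; for $n=9$ ($d=4$) they are $0,4,5,1,2,6,7,3,4$, so labels $2$ and $9$ collide; in general the positions satisfy $p_{4k+1}=2k$, so label $n$ returns to position $(n-1)/2=d$, already occupied by label $2$, whenever $n\equiv 1\pmod 4$. Thus the point you dismissed as bookkeeping (bijectivity of the walk for odd $n$) is actually false, and your upper bound is unproven for half of the odd values of $n$. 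The repair is to change the labeling in the odd case: the paper labels $v_i$ by $2i-1$ for $i<m$ and by $2(i-m+1)$ for $i\geq m$, which is a walk of constant step $m-1=(n+1)/2$ (a unit mod $n$, hence a genuine bijection), and under it \emph{every} consecutive pair $\{i,i+1\}$ sits at distance $m-2=d$. Your complement-too-short argument then goes through verbatim and in fact simplifies: a long arc has complement of length at most $n-m=d-1$, so for $j\geq i+3$ the pair $\{i+1,i+2\}$ cannot avoid its interior, for $j=i+2$ the label $i+1$ cannot avoid it, and for $j=i+1$ the two arcs have only $m-1$ and $m$ vertices; there is no edge case $j=i+3$ to treat separately because this labeling has no adjacent consecutive pairs.
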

\begin{proof}
Let $C_n$ be the cycle on $n$ vertices and $m$ be as defined in the
theorem. To show the result, we first prove that in any labeling of
the vertices of $C_n$, one can find an admissible path with at least
$m$ vertices.

In an arbitrary labeling of the vertices of $C_n$, one of the
following situation happens:

{\bf case 1}: For all $i \in \{1,\ldots ,n-1\},\  d(i,i+1)=1$. This
case happens if and only if we give successive integers to the
vertices. i. e., ($E(C_n)=\{\{1,2\},\{2,3\},\ldots,
\{n-1,n\},\{n,1\}\}$). So $P: 1, n , n-1, \ldots , 3$ is an
admissible path with $n-1$ vertices and $n-1\geq m$.

{\bf case 2}: There exists $i \in \{1,\ldots ,n-1\},\
d(i,i+1)=\ell\geq 2$. So we have two admissible paths $$P_1: i,
j_1,\ldots , j_{\ell-1}, i+1\ {\text {and}} \ P_2: i, j'_1,\ldots ,
j'_{n-\ell-1}, i+1$$ where $\{i,i+1\}\sqcup \{j_1,\ldots ,j_{\ell
-1}\}\sqcup \{j'_1,\ldots , j'_{n-\ell-1}\}=[n]$, $P_1$ has $\ell+1$
vertices and $P_2$ has $n-\ell+1$ vertices.

In the case that $n$ is even, if $\ell+1<\frac{n}{2}+1$ and
$n-\ell+1<\frac{n}{2}+1$, then $n+2<n+2$ which is a contradiction.
So, one of the paths $P_1$ and $P_2$ has at least $m$ vertices.

Now assume that $n$ is odd. Since $d(i,i+1)=\ell$, we have $\ell\leq
n-\ell$. Moreover, $\ell=n-\ell$ if and only if $n=2\ell$ which is a
contradiction. So, $\ell<n-\ell$.

If $n-\ell+1<\frac{n+1}{2}+1$, then by $1+\ell<n-\ell+1<
\frac{n+1}{2}+1$ we have $n+2<n+2$ which is a contradiction. So
$P_2$ has at least $m$ vertices.

So in each labeling of the vertices of $C_n$, we have  an admissible
path with at least $m$ vertices.

Now, if we find a labeling of the vertices of $C_n$ such that each
admissible path has at most $m$ vertices, the conclusion follows.

Suppose that:

$$V(C_n)=\{v_1,v_2,\ldots,v_n\}, \ \
E(C_n)=\{\{v_1,v_2\},\{v_2,v_3\},\ldots
,\{v_{n-1},v_n\},\{v_n,v_1\}\}$$

If $n$ is even, we do as follows:

\begin{enumerate}
    \item $S=\{v_1,v_2,\ldots ,v_n\}$,
    \item label $v_1$ as $1$,
    \item $i=1$,
    \item \textbf{While} $i<n$ \textbf{do}
    \begin{enumerate}
        \item Pick $v_j\in S$ such that $d(i,v_j)=m-1$ and label
        $v_j$ as $i+1$,
        \item If $i+2<n$, label $v_{j+1}$ as $i+2$,
        \item $i:=i+2$.
    \end{enumerate}
\end{enumerate}
By this labeling of the vertices, for each $i$, $d(i,i+1)=m-1$ if
$i$ is odd and $d(i,i+1)=1$ if $i$ is even. So we have some
admissible path with $m$ vertices.

\vspace{2mm}

If $n$ is odd, we do as follows:

For each $1\leq i< m$, label $v_i$ as $2i-1$ and for each $m\leq
i\leq n$, label $v_i$ as $2(i-m+1)$. \\ By this labeling, for each
$i$, $d(i,i+1)=m-2$ and  for each $i$ there is a unique admissible
path with $m$ vertices between $i$ and $i+1$ .

 Now assume that $P: j_1,\ldots ,j_t$ ($t>m$) is an admissible path in
 $C_n$. So, $j_t> j_1+1$.

  If $n$ is odd, by the fact that
 $d(i,i+1)=m-2$ for each $i$, we conclude $j_1+1\in V(P)$ which is a
 contradiction.

 Assume that $n$ is even. If $j_1$ is odd, as above we conclude that $j_1+1\in
 V(P)$ which is the desired contradiction. If $j_1$ is even and $j_1+1\notin V(P)$, then $P'= j_1+1,
 j_1,\ldots ,j_{t-1}$ is a path with $t$ vertices. Since
 $d(j_1+1,j_1+2)=m-1$, $j_1+2\in V(P')$. So $j_t>j_1+2$ and
 $j_1+2\in V(P)$ again a contradiction.
\end{proof}

The next corollaries are the generalization of the fact that a
closed graph is chordal. These results are immediate consequences of
Proposition \ref{bridge} and Theorem \ref{cycle}.
\begin{Corollary}
If $G$ is an $m-$closed graph, then each cycle of $G$ with $2m-1$ or
more vertices has a chord.
\end{Corollary}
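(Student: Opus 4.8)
The plan is to argue by contradiction using the two cited results in tandem. Suppose $G$ is $m$-closed but contains a cycle $C$ on $n \geq 2m-1$ vertices that has no chord. The first step is to observe that a chordless cycle is precisely an induced subgraph of $G$ isomorphic to $C_n$: since no two non-adjacent vertices of $C$ are joined by an edge of $G$, the subgraph induced on $V(C)$ is exactly the cycle $C_n$. This identification is the conceptual bridge that lets us apply the cycle computation to a subgraph of an arbitrary $m$-closed graph.

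Next I would invoke Theorem \ref{cycle} to conclude that this induced $C_n$ is $\ell$-closed, where $\ell = \tfrac{n}{2}+1$ for $n$ even and $\ell = \tfrac{n+1}{2}+1$ for $n$ odd. Then Proposition \ref{bridge}(ii), applied with $H = C_n$ the $\ell$-closed induced subgraph of $G$, forces $G$ to be $m'$-closed for some $m' \geq \ell$. Since $G$ is $m$-closed and the value is unique, this gives $m \geq \ell$.

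The remaining step is the arithmetic that derives the contradiction from $n \geq 2m-1$. If $n$ is even, then $n \geq 2m-1$ together with parity gives $n \geq 2m$, so $\ell = \tfrac{n}{2}+1 \geq m+1 > m$. If $n$ is odd, then $n+1 \geq 2m$ gives $\ell = \tfrac{n+1}{2}+1 \geq m+1 > m$. In both cases $\ell > m$, contradicting $m \geq \ell$. Hence no such chordless cycle can exist, which is exactly the claim.

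I do not expect a serious obstacle here; the corollary is genuinely immediate once the chordless-cycle-equals-induced-$C_n$ observation is made. The only point requiring a little care is the parity bookkeeping in the final inequality, where one must use that $2m-1$ is odd to sharpen $n \geq 2m-1$ to $n \geq 2m$ in the even case.
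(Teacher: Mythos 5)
Your proposal is correct and is exactly the argument the paper intends: the paper states this corollary (without written proof) as an immediate consequence of Proposition \ref{bridge}(ii) and Theorem \ref{cycle}, and your observation that a chordless cycle is an induced $C_n$, followed by the parity arithmetic turning $n\geq 2m-1$ into $\ell>m$, is precisely that deduction spelled out. One small caveat, inherited from the paper's statement rather than a flaw in your reasoning: Theorem \ref{cycle} requires $n\geq 4$, so the case $m=2$, $n=3$ is not covered by the argument (and indeed a triangle in a closed graph has no chord), but for $m\geq 3$ one has $2m-1\geq 5$ and your proof goes through verbatim.
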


\begin{Corollary}
Let $G$ be an $m-$closed graph and $\ell=\max \{ t\ | \ \exists
{\text { an induced cycle with }}\ t\ {\text{vertices in }}\ G\}$.
If $\ell \geq 4$, then $m\geq\left\{
\begin{array}{ll}
    \frac{\ell}{2}+1 & \hbox{$\ell$ is even;} \\
    \frac{\ell+1}{2}+1 & \hbox{$\ell$ is odd.} \\
\end{array}%
\right.$
\end{Corollary}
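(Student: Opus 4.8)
The plan is to combine the two results that the corollary explicitly cites, namely Theorem \ref{cycle} (which computes the exact closedness parameter of a cycle) and part (ii) of Proposition \ref{bridge} (which says that an $\ell'$-closed induced subgraph forces the ambient graph to be $m$-closed for some $m \geq \ell'$). The whole argument is a single deduction, so the work is in setting up the right induced subgraph and matching the numerology.

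First I would extract from the definition of $\ell$ an honest induced cycle to work with. Since $\ell = \max\{t : \text{there is an induced cycle with } t \text{ vertices in } G\}$ and $\ell \geq 4$, there exists an induced cycle $C$ in $G$ on exactly $\ell$ vertices. Being an induced cycle, $C$ is an induced subgraph of $G$ and is isomorphic to $C_\ell$. Next I would apply Theorem \ref{cycle} to $C_\ell$: it tells us that $C_\ell$ is $m_0$-closed, where
\[
m_0 = \begin{cases} \frac{\ell}{2}+1 & \ell \text{ even},\\ \frac{\ell+1}{2}+1 & \ell \text{ odd}, \end{cases}
\]
which is precisely the right-hand side of the asserted inequality.

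Finally I would feed this into Proposition \ref{bridge}(ii) with $H = C$ (an $m_0$-closed induced subgraph of $G$): the conclusion is that $G$ is $m$-closed for some $m \geq m_0$. Since $G$ is assumed to be $m$-closed, this gives $m \geq m_0$, which is exactly the stated bound in both the even and odd cases.

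I do not anticipate a genuine obstacle here, since the statement is designed as an immediate corollary; the only point requiring a word of care is the observation that the extremal induced cycle $C_\ell$ really is an \emph{induced} subgraph, so that Proposition \ref{bridge}(ii) applies verbatim with the sharp value of $m_0$ supplied by Theorem \ref{cycle}. Everything else is a direct substitution of the cycle formula into the induced-subgraph monotonicity of the $m$-closed parameter.
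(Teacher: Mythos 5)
Your proposal is correct and is exactly the argument the paper intends: the paper gives no separate proof, stating only that the corollary is an immediate consequence of Proposition \ref{bridge} and Theorem \ref{cycle}, and your deduction (extract an induced cycle on $\ell$ vertices, apply Theorem \ref{cycle} to get its exact closedness parameter $m_0$, then invoke Proposition \ref{bridge}(ii) for the monotonicity $m \geq m_0$) is precisely that chain. Nothing further is needed.
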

%%%%%%%%%%%%%%%%%%%%%%%%5
%%%%%%%%%%%%%%%%%%%%%%%%%%5

A generalization of the notion of closed graph is {\it weakly closed
graph} which has been introduced in \cite{M1} Let G be a graph. $G$
is said to be weakly closed if there exists a labeling which
satisfies the following condition: for all $i, j$ such that $\{i,
j\} \in E(G)$, $i$ is adjacentable with $j$ (for the definition of
adjacentable see \cite[Definition 1.2]{M1}. The following theorem is
a characterization of weakly closed graphs.

\begin{Theorem}\label{weakly closed 1}\cite[Theorem 1.9]{M1} \\
Let $G$ be a graph. Then the following conditions are equivalent:
\begin{enumerate}
\item
 $G $ is weakly closed.
 \item There exists a labeling which satisfies
the following condition: for all$ i, j$ such that $\{i, j\} \in
E(G)$ and $j
> i + 1$, the following assertion holds: for all $i < k < j$, $\{i, k\}
\in E(G)$ or $\{k, j\}\in E(G)$.
\end{enumerate}
\end{Theorem}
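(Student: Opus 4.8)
The plan is to prove the equivalence by fixing a single labeling of $V(G)=[n]$ and showing that this labeling makes $G$ weakly closed (in the sense of \cite[Definition 1.2]{M1}, that is, $i$ is adjacentable with $j$ for every edge $\{i,j\}$) if and only if it satisfies the intermediate-vertex condition of part (2). Both statements are existential over labelings, so once the two properties are shown equivalent for an arbitrary fixed labeling, the outer quantifiers match automatically and the theorem follows. The first step is therefore to unfold the definition of ``adjacentable'' from \cite[Definition 1.2]{M1}: the assertion ``$i$ is adjacentable with $j$'' (for an edge $\{i,j\}$ with $i<j$) is a constraint on the vertices $k$ lying strictly between $i$ and $j$, so everything reduces to understanding, edge by edge, what is being required on the open interval $(i,j)$.

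For the implication (1)$\Rightarrow$(2) I would fix an edge $\{i,j\}$ with $j>i+1$ and an intermediate vertex $i<k<j$, and extract from the adjacentability of $i$ with $j$ the conclusion that $\{i,k\}\in E(G)$ or $\{k,j\}\in E(G)$. If adjacentability is phrased as a direct requirement on intermediate vertices, this is immediate; if instead it is phrased recursively, with adjacentability of $i$ with $j$ reducing to adjacentability across some splitting vertex, I would argue by induction on the interval length $j-i$, peeling off one splitting vertex at a time and applying the inductive hypothesis to the two shorter edges to recover the one-step alternative $\{i,k\}\in E(G)$ or $\{k,j\}\in E(G)$.

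The converse (2)$\Rightarrow$(1) is the substantive direction and proceeds by the same induction on $j-i$. Given an edge $\{i,j\}$, I want to certify that $i$ is adjacentable with $j$. When $j=i+1$ there is no intermediate vertex, so adjacentability holds trivially and this serves as the base case; when $j>i+1$, condition (2) supplies, for each $i<k<j$, an edge joining $k$ to one of the two endpoints. I would then feed these edges into the definition of adjacentability: in the recursive formulation this is precisely the data needed to split the interval $(i,j)$ at an appropriate $k$ and invoke the inductive hypothesis on the resulting shorter edges, so that the adjacentability of $i$ with $j$ assembles from that of the pieces.

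I expect the main obstacle to be exactly this reconciliation. Condition (2) is a flat, one-step statement about each intermediate vertex, whereas adjacentability (as defined in \cite{M1}) is plausibly a multi-step or recursive notion, so the real work is to show that the one-step alternative, assumed simultaneously at every edge, is strong enough to drive the recursion down to the base case $j=i+1$ without ever forcing a stronger local requirement. Carrying the induction on $j-i$ through cleanly --- in particular verifying that the splitting vertex handed over by condition (2) yields edges to which the inductive hypothesis genuinely applies --- is where the care is needed, while the base case and the matching of the existential quantifiers over labelings are routine.
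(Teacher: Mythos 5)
The first thing to note is that the paper contains no proof of this statement at all: it is quoted verbatim from \cite[Theorem 1.9]{M1}, and even the notion of ``adjacentable'' on which condition (1) rests is only cited (\cite[Definition 1.2]{M1}), never reproduced. So there is no in-paper argument to compare against, and your proposal must stand on its own as a proof of Matsuda's theorem. It does not, and the gap is concrete: you never commit to what ``adjacentable'' means. Both directions of your argument branch on a guess --- ``if adjacentability is phrased as a direct requirement on intermediate vertices, this is immediate; if instead it is phrased recursively, I would argue by induction'' --- and the decisive step of the substantive direction $(2)\Rightarrow(1)$ is literally ``feed these edges into the definition of adjacentability,'' which verifies nothing. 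A proof cannot be conditional on an unspecified definition. Note also that your first guessed reading cannot be the right one: if ``$i$ is adjacentable with $j$'' meant exactly ``every $k$ with $i<k<j$ satisfies $\{i,k\}\in E(G)$ or $\{k,j\}\in E(G)$,'' then Theorem \ref{weakly closed 1} would be a tautology, and Matsuda would not have stated it as a theorem. The actual definition in \cite{M1} is an operational, multi-step condition, and the whole content of \cite[Theorem 1.9]{M1} is that this multi-step notion collapses, under a single labeling applied simultaneously to all edges, to the one-step alternative in (2). Your closing paragraph accurately self-diagnoses this reconciliation as ``where the care is needed''; that reconciliation is the entire theorem, and the proposal supplies no argument for it.

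Two further remarks. First, your reduction to a fixed labeling is legitimate as an opening move (both (1) and (2) are existential over labelings, so a labelingwise equivalence would suffice), but even this needs to be checked against the real definition rather than asserted. Second, the induction on $j-i$ as you describe it has a step that fails on its own terms: condition (2) hands you, for each intermediate $k$, \emph{one} edge joining $k$ to \emph{one} of the endpoints; it does not produce ``two shorter edges'' $\{i,k\}$ and $\{k,j\}$ both in $E(G)$, so the proposed recursion ``on the resulting shorter edges'' cannot be invoked --- on the side where no edge is guaranteed, the inductive hypothesis (which applies only to edges of $G$) has nothing to apply to. To repair the proposal you would need to state \cite[Definition 1.2]{M1} precisely and run both directions of the labelingwise equivalence against that definition; as written, the argument is a strategy template with its central step missing.
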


In the following we relate the $m-$closed graphs to weakly closed
graphs.

\begin{Theorem}\label{weakly closed}
Let $G$ be a weakly closed graph. Then $G$ is $m-$closed for some
$m\leq 4$.
\end{Theorem}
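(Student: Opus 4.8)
The plan is to fix the labeling furnished by Theorem \ref{weakly closed 1}(2) and prove the sharper statement that, with respect to it, every admissible path of $G$ has at most four vertices. Since an admissible path on $t$ vertices has $t-2$ interior vertices, the associated generator $u_\pi f_{ij}$ of Theorem \ref{reduced grobner basis} has degree $(t-2)+2=t$; so a four-vertex bound caps every degree appearing in $\mathcal{G}$ by $4$, and hence $G$ is $m$-closed for some $m\le 4$. All the work is therefore in the combinatorial bound on admissible paths.

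So I would take an admissible path $\pi: a=v_0,v_1,\ldots,v_r=b$ with $a<b$ and assume $r\ge 2$, recording two standing facts. First, the empty set is a proper subset of the interior $\{v_1,\ldots,v_{r-1}\}$, so condition (iii) forbids $a,b$ from being a path, giving $\{a,b\}\notin E(G)$. Second, by condition (ii) every interior vertex is either $<a$ or $>b$. Next, assuming $r\ge 3$, I would show the path \emph{starts low and ends high}, i.e. $v_1<a$ and $v_{r-1}>b$: if $v_1>b$ then $a<b<v_1$, and the weakly closed condition applied to $\{a,v_1\}$ at $k=b$ yields $\{a,b\}\in E(G)$ or $\{b,v_1\}\in E(G)$; the first is excluded, so $\{v_1,b\}\in E(G)$ and $a,v_1,b$ is a path through the proper interior subset $\{v_1\}$, contradicting (iii). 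The mirror argument on $\{v_{r-1},b\}$ (with $v_{r-1}<a$) gives $\{a,v_{r-1}\}\in E(G)$ and the forbidden shortcut $a,v_{r-1},b$, so $v_{r-1}>b$.

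The decisive step is ruling out $r\ge 4$. Because the interior sign pattern begins with a vertex $<a$ and ends with one $>b$, there is a consecutive transition pair $v_k<a<b<v_{k+1}$ with $1\le k\le r-2$. Feeding the two intermediate labels $a$ and $b$ into the weakly closed condition for the edge $\{v_k,v_{k+1}\}$ gives $\{v_k,a\}\in E(G)$ or $\{a,v_{k+1}\}\in E(G)$, and $\{v_k,b\}\in E(G)$ or $\{b,v_{k+1}\}\in E(G)$. The two ``outer'' alternatives produce order-preserving shortcuts: $\{a,v_{k+1}\}\in E(G)$ gives the path $a,v_{k+1},\ldots,v_{r-1},b$ (dropping $v_1,\ldots,v_k$) and $\{v_k,b\}\in E(G)$ gives $a,v_1,\ldots,v_k,b$, each through a proper interior subset, contradicting (iii). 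Hence necessarily $\{a,v_k\}\in E(G)$ and $\{v_{k+1},b\}\in E(G)$, so $a,v_k,v_{k+1},b$ is a path through the interior subset $\{v_k,v_{k+1}\}$, which is proper precisely because $r\ge 4$; this last shortcut contradicts (iii). Therefore $r\le 3$.

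The main obstacle is exactly this final step: obtaining a bound uniform in $r$ instead of drowning in case analysis over the interior sign pattern. The device that makes it uniform is to isolate a single low-to-high transition edge and use the two fixed labels $a,b$ as the ``witnesses'' $k$ in the weakly closed condition, after which admissibility eliminates the outer options and leaves exactly the two edges $\{a,v_k\}$ and $\{v_{k+1},b\}$ needed to collapse the path to four vertices. In writing this up one must check two routine points throughout: that each shortcut respects the original left-to-right ordering of the interior vertices, so that it is genuinely a path forbidden by (iii); and that every edge to which the weakly closed condition is applied satisfies its gap hypothesis $j>i+1$, which holds because $a$ and/or $b$ lie strictly between the endpoints in each instance.
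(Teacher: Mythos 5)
Your proposal is correct and takes essentially the same route as the paper's proof: fix the labeling from Theorem \ref{weakly closed 1}, show that in a long admissible path the first interior vertex lies below $a$ and the last lies above $b$, locate the low-to-high transition edge, and apply the weakly closed condition at that edge together with admissibility condition (iii) to reach a contradiction. The only difference is organizational: the paper splits into cases according to the position of the transition edge and uses a single application of the weakly closed condition (with intermediate vertex $j$ or $i$) in each case, whereas you apply it twice uniformly (with both $a$ and $b$) and combine the surviving alternatives into the four-vertex shortcut $a,v_k,v_{k+1},b$.
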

\begin{proof}
Suppose that $G$ is a weakly closed graph on $[n]$. Then by Theorem
\ref{weakly closed 1}, for all $ i, j$ such that $\{i, j\} \in E(G)$
and $j
> i + 1$, the following assertion holds: for all $i < k < j$, $\{i, k\}
\in E(G)$ or $\{k, j\}\in E(G)$.

We prove that each admissible path of $G$ has at most $4$ vertices.
Assume to the contrary that there exists an admissible path $P:
i=i_1,i_2,\ldots, i_{m-1},i_m=j$ with $m\geq 5$ vertices.  Note that
$i<j$. If $i_2>j$, then $i<j<i_2$ and $\{i,i_2\}\in E(G)$. So
$\{i,j\}\in E(G)$ or $\{i_2,j\}\in E(G)$ which is a contradiction.
If $i_{m-1}<i$, then $i_{m-1}<i<j$ and $\{i_{m-1},j\}\in E(G)$. So
$\{i_{m-1},i\}\in E(G)$ or $\{i,j\}\in E(G)$. Again, it is a
contradiction. Therefore $i_2<j$ and $i_{m-1}>i$. Since $P$ is an
admissible path, we have $i_2<i$ and $i_{m-1}>j$.

Let $$t=\min\{r\ | 2<r\leq m-1, i_r>j\}.$$ So $i_{t-1}<i<j<i_t$ and
$\{i_{t-1},i_t\}\in E(G)$. If $t=3$, then $\{i_2,j\}\in E(G)$ or
$\{j,i_3\}\in E(G)$ which is impossible because $m\geq 5$ and $P$ is
an admissible path. If $t>3$, then $\{i_{t-1},i\}\in E(G)$ or
$\{i,i_t\}\in E(G)$. This case also is impossible since $P$ is an
admissible path.

So, in any case we get a contradiction. Thus $m\leq  4$ and the
result follows.
\end{proof}

Note that the converse of Theorem \ref{weakly closed} is not true
since $C_5$ is $4-$closed and not weakly closed.

%%%%%%%%%%%%%%%%%%%%%%%%%%%%%%
%%%%%%%%%%%%%%%%%%%%%%%%%%%%%%
%%%%%%%%%%%%%%%%%%%%%%%%%%%%%

\section{$3-$closed trees}

In the following we are going to characterize $3-$closed trees.

Let $G$ be a simple graph on the vertex set $[n]$ and $\mathcal{G}$
has no element of degree more than $3$, then $d(i,i+1)\leq 2$ for
each $i$. But the converse is not true in general. For example, let
$C$ be the cycle on the vertex set $[n]$ and with the edge set
$\{\{1,3\},\{3,4\},\{2,4\},\{2,5\},\{1,5\}\}$. Then for each $i$,
$d(i,i+1)\leq 2$ but $C$ is $4-$closed.

We recall that by \cite[Corollary 1.3]{HHKR}, a tree is a  closed
graph if and only if it is a path. Next result shows that
 a $3-$closed
labeling for a tree $T$ is a labeling in which
 $d(i,i+1)\leq 2$ for each $i$.

\begin{Theorem}\label{3-closed trees}
Let $T$ be a tree with $n$ vertices and assume that $T$ is not a
path. Then $T$ is $3-$closed if and only if there exists a labeling
for $V(T)$ such that $d(i,i+1)\leq 2$ for each $i$.
\end{Theorem}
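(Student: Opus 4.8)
The plan is to work entirely on the level of admissible paths, using that the degree of the Gr\"obner basis element $u_\pi f_{ij}$ attached to an admissible path $\pi: i=i_0,\dots,i_r=j$ equals $r+1$, the number of vertices of $\pi$ (each of the $r-1$ interior vertices contributes exactly one variable to $u_\pi$, and $f_{ij}$ has degree $2$). Thus by Theorem \ref{reduced grobner basis}, $T$ is $3$-closed precisely when $T$ is not a path (so that $m\ge 3$, since by \cite[Corollary 1.3]{HHKR} a tree is closed, i.e.\ $2$-closed, iff it is a path) and there is a labeling under which every admissible path has at most $3$ vertices. The whole argument rests on one structural remark about trees: between any two vertices there is a unique path, so (a) for $i<j$ the only candidate admissible path from $i$ to $j$ is the unique tree-path $P(i,j)$, and it is admissible iff all of its interior vertices have labels outside $[i,j]$; and (b) the tree-path $P(i,i+1)$ is always admissible, because conditions (i) and (iii) hold automatically by uniqueness while condition (ii) is vacuous (no label lies strictly between $i$ and $i+1$).

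For the forward implication I would fix a $3$-closed labeling. By remark (b), $P(i,i+1)$ is an admissible path with $d(i,i+1)+1$ vertices, and since every admissible path has at most $3$ vertices this forces $d(i,i+1)\le 2$ for all $i$. This direction is immediate.

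For the converse, assume a labeling with $d(i,i+1)\le 2$ for all $i$; as $T$ is not a path it is not $2$-closed, so it suffices to show that no admissible path has $\ge 4$ vertices. Suppose otherwise and, among all admissible paths of length $r\ge 3$, choose $P: i=i_0,\dots,i_r=j$ with $j-i$ minimal. Since $d(i,j)=r\ge 3>2$ we cannot have $j=i+1$, so $j\ge i+2$ and the vertex labeled $i+1$ lies strictly inside $[i,j]$, hence is not on $P$. The heart of the proof is to use the short tree-path $Q$ from $i$ to $i+1$ (of length $\le 2$) to manufacture an admissible path of length $\ge 3$ with strictly smaller label-gap, contradicting minimality. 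Concretely I would analyze how $Q$ meets $P$: if $i+1$ is adjacent to $i$ or to $i_1$, then prepending $i+1$ to $P$ (respectively to $i_1,\dots,j$) yields an admissible path $P(i+1,j)$ of length $\ge 3$ with gap $j-i-1$; if $Q=i+1,v,i$ with $v\notin V(P)$, then for $v<i$ or $v>j$ the walk $i+1,v,i,i_1,\dots,j$ is an admissible $P(i+1,j)$, whereas for $i<v<j$ (the remaining possibility, after checking $v\ne j$, since $\{i,j\}$ being an edge would contradict $r\ge 3$) the walk $v,i,i_1,\dots,j$ is an admissible $P(v,j)$ of length $r+1$ with gap $j-v<j-i$. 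In each case the interior labels avoid the relevant interval, so admissibility follows from remark (a), and minimality is violated.

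I expect the last case — where the connecting vertex $v$ carries a label strictly between $i$ and $j$ — to be the main obstacle, since there the natural forward extension of $P$ is blocked and one must instead restart the path at $v$; verifying $v\notin\{i,i+1,j\}$ and that the resulting path is genuinely admissible with strictly smaller gap is the delicate point. Once this reduction is established, infinite descent on $j-i$ (with the impossible base case $j=i+1$) completes the converse, and combining the two directions proves the theorem.
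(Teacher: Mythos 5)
Your proposal is correct, and every step verifies: the degree of $u_\pi f_{ij}$ is indeed the number of vertices of $\pi$; in a tree condition (iii) of admissibility is automatic (a shortcut would give a second path between the endpoints), so admissibility reduces to condition (ii), which makes your remarks (a) and (b) sound; and your four cases ($i+1$ adjacent to $i$, $i+1$ adjacent to $i_1$, common neighbor $v$ with $v<i$ or $v>j$, common neighbor with $i<v<j$) exhaust $d(i,i+1)\leq 2$ and each produces an admissible path with at least $4$ vertices and strictly smaller gap. Your route is a close variant of the paper's rather than a wholly different one: the paper also argues by contradiction from an extremal admissible path and splits into the same connection cases (its cases a, b, c match your $\alpha$, $\beta$, $\gamma$), and in the common-neighbor case it ends up with the same restart path $v,i,i_1,\ldots,j$ (its $P''$). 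The genuine difference is the extremal bookkeeping. The paper uses a two-level maximization --- maximal length $m-1$, then maximal starting label $i_1$ among paths of that length --- so case (a) and case (c) contradict maximal length while case (b) contradicts maximal start; moreover, in case (c) it locates the label of the common neighbor indirectly, by observing that the extended path $P'$ of length $m+1$ cannot be admissible and that only the new vertex can violate condition (ii). You instead minimize the single quantity $j-i$ over admissible paths with at least $4$ vertices, so all your cases contradict the same measure, and you locate $v$ by a direct case split on its label; this buys a uniform one-parameter descent at the cost of tracking the gap of each constructed path. You are also more complete than the paper on two points it leaves implicit: the identification of Gr\"obner-basis degrees with vertex counts of admissible paths, and the forward direction (that a $3$-closed labeling forces $d(i,i+1)\leq 2$), which the paper asserts in the remark preceding the theorem without proof but which your remark (b) establishes via admissibility of the tree-path from $i$ to $i+1$.
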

\begin{proof}
Assume to the contrary that there exists a tree $T$ on the vertex
set $[n]$ such that $d(i,i+1)\leq 2$ for each $i$, and $T$ has an
admissible path of length at least $3$. Let
$$m-1=\max\{\ell(P)\ | \ P \ {\text {is an admissible path}}\}$$  and
$$i_1=\max\{ t\ | \ {\text {there exists an admissible path of
length}}\ m-1\ {\text{ starting from}} \ t\}.$$ Then $m>3$ and we
can consider an admissible path  like $P: i_1,i_2,\ldots , i_m$.
Since $T$ is a tree, $d(i_1,i_m)\geq 3$. So, $i_1+1\neq i_m$ which
shows that $i_1<i_1+1\leq i_m-1<i_m$. Therefore $i_1+1\notin
\{i_2,i_3,\ldots ,i_{m-1}\}$. Moreover, by $d(i_1,i_1+1)\leq 2$, one
of the following situations happens:

\vspace{2mm}

{\bf Case a:} $\{i_1,i_1+1\}\in E(T)$. In this case,
$i_1+1,i_1,\ldots , i_m$ is an admissible path of length $m$ which
is a contradiction by our choice of $m$.

\vspace{2mm}

{\bf Case b:} $\{i_1+1,i_2\}\in E(T)$. In this case,
$i_1+1,i_2,i_3,\ldots , i_m$ is an admissible path of length $m-1$
which is a contradiction by our choice  of $i_1$.

\vspace{2mm}

{\bf Case c:} There exists $j\in [n]\setminus \{i_2,\ldots ,i_m\}$
such that $i_1+1, j, i_1$ is a path. In this case, consider the path
$P': i_1+1, j , i_1, i_2, \ldots , i_m$. Since $\ell(P')=m+1$, by
our choice of $m$, $P'$ is not an admissible path. So,
$i_1<i_1+1<j<i_m$. It is easy to see that $P'': j, i_1, i_2,\ldots
,i_m$ is an admissible path of length $m$ which is again a
contradiction by our choice of $m$.
\end{proof}

\begin{Remark}
By Theorem \ref{3-closed trees}, a labeling of a tree $T$ is a
$3-$closed labeling if and only if $d(i,i+1)\leq 2$ for each $1\leq
i<n$. This is not true for an arbitrary $3-$closed graph. For
example, Let $G$ be a graph with $V(G)=\{v_1,\ldots, v_5\}$ and
$E(G)=\{\{v_1,v_2\},\{v_2,v_4\},\{v_1,v_3\},\{v_3,v_4\},\{v_2,v_5\}\}$.
 Then $G$ is a bipartite $3-$closed graph. If we assign $i$ to each
vertex  $v_i$, then $d(i,i+1)\leq 2$ for each $1\leq i<5$ but this
is not a $3-$closed labeling of $G$.
\end{Remark}
Next we give an example of a tree which is not $3-$closed.
\begin{Example}
Consider the following tree on $16$ vertices (Figure 1).
 $\ $
\begin{figure}[h!]
\centering \scalebox{0.6}{
\begin{tikzpicture}[vertex/.style={circle,draw=black,fill=black,thick,
inner sep=0pt,minimum size=2mm}] \node[label=60:$i_{16}$] (i16) at
(0,0) [vertex] {}; \node[label=60:$i_{15}$] (i15) at (144:2)
[vertex] {}; \node[label=60:$i_6$] (i6) at (0:2)  [vertex] {};
\node[label=0:$i_3$] (i3) at (72:2)  [vertex] {};
\node[label=60:$i_9$] (i9) at (-72:2) [vertex]  {};
\node[label=90:$i_{12}$] (i12) at (-144:2)  [vertex] {};
\node[label=60:$i_4$] (i4) at (12:4)  [vertex] {};
\node[label=60:$i_5$] (i5) at (-12:4)  [vertex] {};
\node[label=60:$i_2$] (i2) at (60:4)  [vertex] {};
\node[label=60:$i_1$] (i1) at (84:4)  [vertex] {};
\node[label=60:$i_{14}$] (i14) at (132:4)  [vertex] {};
\node[label=60:$i_{13}$] (i13) at (156:4) [vertex]  {};
\node[label=60:$i_{11}$] (i11) at (204:4) [vertex]  {};
\node[label=0:$i_{10}$] (i10) at (228:4) [vertex]  {};
\node[label=60:$i_8$] (i8) at (276:4)  [vertex] {};
\node[label=60:$i_7$] (i7) at (300:4)  [vertex] {}; \draw
(i16)--(i6)--(i4); \draw (i6)--(i5); \draw (i16)--(i12)--(i10);
\draw (i12)--(i11); \draw (i16)--(i9)--(i8); \draw (i9)--(i7); \draw
(i16)--(i15)--(i13); \draw (i15)--(i14); \draw (i16)--(i3)--(i1);
\draw (i3)--(i2);
\end{tikzpicture}}
\caption{}
\end{figure}

We prove that $T$ is not  $3-$closed. By contradiction assume that
there exists a labeling of $V(T)$ such that
\begin{equation}\label{distance}
 d(k,k+1)\leq 2 \ \ \ {\text {for all} }\  k\in \{1,\ldots ,15\}.
\end{equation}

Without loss of generality, we can assume that
$\{1,16\}\cap\{i_7,i_8,\ldots,i_{15}\}=\emptyset$. So,
\begin{equation}\label{t} \{i_j-1,i_j+1\}\subset \{1,2,\ldots
,16\} \ \ \ {\text {for all} } \ j\in \{7,8,\ldots , 15\}.
\end{equation}

If $i_7<i_8$  and they are not two successive integers, then by
(\ref{distance}) $\{i_7-1,i_7+1,i_8+1\}\subseteq \{i_9,i_{16}\}$
which is a contradiction. So, we can assume that $i_8=i_7+1$. By a
similar argument, we should also have, $i_{11}=i_{10}+1$ and
$i_{14}=i_{13}+1$.

Again, by  (\ref{distance}) and (\ref{t}) we can easily see that
$$i_{16}=i_7-1 \ {\text {or}} \ i_{16}=i_7+2,$$
and
$$i_{16}=i_{10}-1\  {\text {or}}\  i_{16}=i_{10}+2,$$
and
$$i_{16}=i_{13}-1\  {\text {or}} \ i_{16}=i_{13}+2.$$

So, $i_7=i_{10}$ or $i_7=i_{13}$ or $i_{10}=i_{13}$ which is a
contradiction.
\end{Example}

\begin{Definition}\label{bipartite}
Let $G$ be a graph on the vertex set $[n]$, we associate to $G$ a
bipartite graph $G^*$ where
$$V(G^*)=\{x_1,\ldots ,x_n\}\sqcup \{y_1,\ldots ,y_n\}, \ \
E(G^*)=\{x_iy_j\ | \ \{i,j\}\in E(G)\ {\text {and}}\ i<j\}.$$

Note that if $G$ is a closed graph, for a closed labeling of $G$,
$LT_{\prec}(J_G)=I(G^*)$ where $I(G^*)$ is the edge ideal of the
graph $G^*$.

Conversely, if $H$ is a bipartite graph on the vertex set
$\{x_1,\ldots ,x_n\}\sqcup \{y_1,\ldots ,y_n\}$ such that for each
$\{x_i,y_j\}\in E(H)$ we have $i<j$, then we can associate to $H$ a
simple graph $H_*$ on the vertex set $[n]$ in a natural way
($(H_*)^*=H$).
\end{Definition}

Note that if $T$ is a tree, then $T^*$ is also a tree. In the
following, we give a characterization of $3-$closed trees by means
of Definition \ref{bipartite}.

\begin{Theorem}\label{3-closed class}
Let $\mathcal{T}_n$ be the set of all bipartite graphs $H$ on the
vertex set $\{x_1,\ldots ,x_n\}\sqcup \{y_1,\ldots ,y_n\}$ with the
following properties:
\begin{enumerate}
\item $\{x_i,y_j\}\in E(H)\Longrightarrow i<j$.

\item for all $i\in \{1,\ldots , n-1\}$ one of the following
conditions holds:
\begin{itemize}
    \item $\{x_i, y_{i+1}\}\in E(H).$
    \item There exists $j>i+1, \ \{x_i,y_{j}\}, \{x_{i+1},y_j\}\in E(H).$
    \item There exists $j<i, \{x_j,y_i\}, \{x_j,y_{i+1}\}\in E(H).$
\end{itemize}
\item $|E(H)|=n-1$
\end{enumerate}
Then a tree with $n$ vertices is $3-$closed if and only if $T$ is
not a path and there exists $H\in \mathcal{T}_n$ such that $T\cong
H_*$.
\end{Theorem}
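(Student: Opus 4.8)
The plan is to reduce everything to Theorem \ref{3-closed trees} through the bijective correspondence $G\leftrightarrow G^*$ of Definition \ref{bipartite}, translating the metric condition ``$d(i,i+1)\le 2$ for all $i$'' into the combinatorial conditions (1)--(3) defining $\mathcal{T}_n$. The heart of the argument is a dictionary between a $3$-closed labeling of the tree and membership of its associated bipartite graph in $\mathcal{T}_n$; once this dictionary is set up, both implications fall out of Theorem \ref{3-closed trees}.

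For the forward implication, I would start from a $3$-closed tree $T$. Since a $3$-closed graph is in particular not closed, and by \cite[Corollary 1.3]{HHKR} a closed tree is exactly a path, $T$ is not a path; moreover Theorem \ref{3-closed trees} supplies a labeling of $V(T)$ by $[n]$ with $d(i,i+1)\le 2$ for every $i$. Setting $H=T^*$, condition (1) holds by the very construction of $T^*$, and since $T$ is a tree with $n-1$ edges, each contributing exactly one edge of $T^*$, condition (3) is immediate. The key step is condition (2): for each $i$ I would use the unique $T$-path realizing $d(i,i+1)\le 2$ to force one of the three bullets. If $d(i,i+1)=1$ then $\{i,i+1\}\in E(T)$ gives $\{x_i,y_{i+1}\}\in E(H)$; if $d(i,i+1)=2$, let $j$ be the common neighbour of $i$ and $i+1$ (unique because $T$ is a tree), and since $j\notin\{i,i+1\}$ either $j>i+1$, yielding $\{x_i,y_j\},\{x_{i+1},y_j\}\in E(H)$, or $j<i$, yielding $\{x_j,y_i\},\{x_j,y_{i+1}\}\in E(H)$. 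Hence $H\in\mathcal{T}_n$, and since $(G^*)_*=G$ we get $T=(T^*)_*=H_*$, so in particular $T\cong H_*$.

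For the converse, assume $T$ is not a path and $T\cong H_*$ for some $H\in\mathcal{T}_n$. Writing $G=H_*$, I would read each bullet of condition (2) backwards to recover the metric condition on $G$ with its natural labeling by $[n]$: the first bullet gives $\{i,i+1\}\in E(G)$, while the second and third each produce a common neighbour of $i$ and $i+1$, so in every case $d(i,i+1)\le 2$. Since $G\cong T$ is a tree that is not a path, Theorem \ref{3-closed trees} shows $G$ is $3$-closed, and because $3$-closedness is an isomorphism invariant, $T$ is $3$-closed.

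The main obstacle, and the only step that genuinely uses that $T$ is a tree rather than an arbitrary graph, is the ``$d(i,i+1)=2$'' translation in the forward direction: one must know that the length-$2$ connection passes through a single vertex $j$ and that $j$ lies outside $\{i,i+1\}$, so that $j>i+1$ or $j<i$ and exactly one of the two common-neighbour bullets applies. In a general graph several length-$2$ paths could coexist and condition (2) would no longer capture $d(i,i+1)\le 2$ faithfully; the acyclicity of $T$ is what makes the dictionary exact, consistent with the Remark following Theorem \ref{3-closed trees}.
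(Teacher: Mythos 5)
Your proposal is correct and follows essentially the same route as the paper: both directions are reduced to Theorem \ref{3-closed trees} via the correspondence $T\leftrightarrow T^*$ of Definition \ref{bipartite}, with condition (2) of $\mathcal{T}_n$ serving as the combinatorial translation of ``$d(i,i+1)\le 2$ for all $i$'' and condition (3) matching the edge count of a tree. In fact you spell out details the paper leaves implicit (the non-path claim via \cite[Corollary 1.3]{HHKR} and the case analysis $j>i+1$ versus $j<i$); the only quibble is your closing remark---the uniqueness of the common neighbour $j$ is never needed, and the dictionary between condition (2) and the metric condition holds for arbitrary simple graphs, the tree hypothesis being essential only because Theorem \ref{3-closed trees} itself fails for general graphs.
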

\begin{proof}
If $T$ is a $3-$closed graph on $[n]$, then, by Theorem
\ref{3-closed trees}, $d(i,i+1)\leq 2, \forall 1\leq i<n$. So $T^*$
satisfies condition 2. Since $|E(T)|=|E(T^*)|=n-1$, the conclusion
follows from the fact that $T=(T^*)_*$.

Conversely, if $H$ satisfies condition 1 then $H_*$ is defined and
is a graph on $[n]$.  By condition 2, in $H_*$, $d(i,i+1)\leq 2$ for
each $i$ and moreover $H_*$ is connected. Now since
$|E(H_*)|=n-1=|V(H_*)|-1$, $H_*$ is a tree. So, by Theorem
\ref{3-closed trees}, $H_*$ is a $3-$closed tree.
\end{proof}

In the next corollary, we find the number of elements of the reduced
 Gr\"{o}bner basis of a $3-$closed tree.

\begin{Corollary}\label{number of reduced}
Let $T$ be a tree on the vertex set $[n]$ and $d(i,i+1)\leq 2$ for
all $i\in\{1,\ldots ,n-1\}$. Then
$|\mathcal{G}|=n-1+\beta_{13}(I(T^*))$.
\end{Corollary}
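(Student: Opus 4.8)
The plan is to split $\mathcal{G}$ according to the degree of its elements and to match the two degrees separately against the two summands $n-1$ and $\beta_{13}(I(T^*))$. Since $d(i,i+1)\le 2$ for all $i$, Theorem \ref{3-closed trees} shows that $T$ is $3$-closed (and if $T$ happens to be a path the same hypothesis forces it to be closed), so by Theorem \ref{reduced grobner basis} every admissible path has at most three vertices and $\mathcal{G}$ contains only elements of degree $2$ and $3$. The degree-$2$ elements are exactly the $f_{ij}$ attached to admissible paths of length $1$, i.e. to the edges of $T$; hence there are $|E(T)|=n-1$ of them. It then remains to prove that the number of degree-$3$ elements equals $\beta_{13}(I(T^*))$.

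Next I would describe the degree-$3$ elements combinatorially. Each one arises from an admissible path $\pi\colon a,k,b$ of length $2$; writing $i=\min(a,b)$ and $j=\max(a,b)$, condition (ii) of admissibility says $k<i$ or $k>j$, while condition (iii) is automatic because $T$ is a tree, so $\{i,j\}\notin E(T)$. Thus the degree-$3$ elements of $\mathcal{G}$ are in bijection with the length-$2$ paths $a,k,b$ of $T$ whose middle vertex is either smaller than both ends or larger than both ends.

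I would then compute $\beta_{13}(I(T^*))$ and identify it with the same count. Since $T^*$ is bipartite it is triangle-free, and for the edge ideal of a triangle-free graph the minimal first syzygies in degree $3$ are exactly the Koszul syzygies coming from two edges that share a vertex; equivalently, by Hochster's formula $\beta_{13}(I(T^*))=\sum_{|W|=3}\dim_{\k}\widetilde{H}_0\big((\Delta(T^*))_W\big)$ receives a contribution $1$ from each induced path on three vertices and no contribution from triangles, which are absent. Hence $\beta_{13}(I(T^*))$ equals the number of length-$2$ paths of $T^*$. The required bijection with the degree-$3$ paths of $T$ is obtained from the definition of $T^*$: a length-$2$ path of $T^*$ with middle vertex $x_p$ consists of edges $x_py_{j_1},x_py_{j_2}$ coming from $\{p,j_1\},\{p,j_2\}\in E(T)$ with $p<j_1,\,p<j_2$, i.e. the $T$-path $j_1,p,j_2$ whose middle is smaller than both ends; dually, a path with middle vertex $y_q$ comes from $\{i_1,q\},\{i_2,q\}$ with $i_1,i_2<q$, giving the $T$-path $i_1,q,i_2$ whose middle exceeds both ends. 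These two constructions are mutually inverse to the assignment of the previous paragraph and partition the degree-$3$ elements according to whether $k<i$ or $k>j$; note that a $T$-path whose middle lies strictly between its two ends yields two $T^*$-edges sharing no vertex, so such paths drop out of both counts, confirming the match.

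The hard part will be the third step: I must verify that every degree-$3$ first syzygy of $I(T^*)$ is minimal and that syzygies from distinct sharing-pairs are independent, which is precisely where the triangle-freeness of $T^*$ enters (equivalently, the Hochster-formula bookkeeping must be carried out so that only induced $P_3$'s contribute). Everything else is orientation bookkeeping, tracking the convention $i<j$ in the definition of $T^*$ against the admissibility condition $k<i$ or $k>j$. Combining the three counts then yields $|\mathcal{G}|=(n-1)+\beta_{13}(I(T^*))$.
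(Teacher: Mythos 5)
Your proposal is correct, but it takes a genuinely different route from the paper's proof. The paper argues homologically: since $J_T$ is generated by quadrics and $\beta_{13}(J_T)=2K_3(T)=0$ by \cite[Theorem 2.2]{SK} (a tree has no triangles), while $LT_\prec(J_T)$ is generated in degrees $2$ and $3$ so that $\beta_{23}(LT_\prec(J_T))=0$ and $\beta_{13}(LT_\prec(J_T))=\beta_{13}(I(T^*))$, Peeva's consecutive cancellations \cite{Pe} force $\beta_{03}(LT_\prec(J_T))=\beta_{13}(LT_\prec(J_T))=\beta_{13}(I(T^*))$, whence $|\mathcal{G}|=\mu(LT_\prec(J_T))=(n-1)+\beta_{13}(I(T^*))$. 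You instead make both sides of the identity combinatorial: the degree-$3$ elements of $\mathcal{G}$ are the admissible $3$-vertex paths (your observation that treeness makes condition (iii) automatic is correct), these biject with the length-$2$ paths of $T^*$ exactly as you describe, and Hochster's formula identifies that count with $\beta_{13}(I(T^*))$ because $T^*$ is triangle-free. This also disposes of the ``hard part'' you flag: minimality and independence of the degree-$3$ syzygies need no separate verification, since Hochster's formula already computes the graded Betti number, so no cancellation argument is needed at all. Your route is more elementary and self-contained, and it yields an explicit bijective interpretation of $\beta_{13}(I(T^*))$; the paper's route scales better, since the same cancellation argument immediately gives the remark following the corollary, namely $|\mathcal{G}|=|E(G)|+\beta_{13}(I(G^*))-2K_3(G)$ for any $3$-closed graph $G$, whereas your direct count would then have to track both the contribution $2$ per triangle in Hochster's formula and the $3$-vertex paths excluded by condition (iii). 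Two small corrections: in your Hochster computation, $(\Delta(T^*))_W$ must be the independence complex of $T^*$ (the Stanley--Reisner complex of $I(T^*)$), not the clique complex that $\Delta(\cdot)$ denotes elsewhere in this paper; and your parenthetical claim that the hypothesis forces a path to have a closed labeling is false --- the path with edges $\{1,3\},\{3,2\}$ satisfies $d(i,i+1)\leq 2$ yet admits the admissible path $1,3,2$ of degree $3$ --- but this is immaterial, since all your argument needs is that $d(i,i+1)\leq 2$ bounds every admissible path by $3$ vertices, which the proof of Theorem \ref{3-closed trees} establishes for any tree, path or not (and indeed the displayed example is consistent with the formula: $|\mathcal{G}|=3=2+1$).
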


\begin{proof}
Let $G$ be a simple graph on $[n]$ and $K_3(G)=$the number of
triangles of $G$. Then by \cite[Theorem 2.2]{SK},
$\beta_{13}(J_G)=2K_3(G)$. So, for an arbitrary tree $T$,
$\beta_{13}(J_T)=0$.

Now, if $d(i,i+1)\leq 2$, then by Theorem \ref{3-closed trees},
$LT_\prec(J_T)$ is generated in degrees $2$ and $3$. So,
$\beta_{23}(LT_\prec(J_T))=0$ and
$$\beta_{13}(LT_\prec(J_T))=\beta_{13}(\langle x_iy_j\ | \ i<j, \
\{i,j\}\in E(T)\rangle)=\beta_{13}(I(T^*)).$$

 Moreover, by \cite{Pe} the graded Betti numbers of
$J_T$ is obtained from the graded Betti numbers of $LT_\prec(J_T)$
by a sequence of consecutive cancelations. So
$$\beta_{03}(LT_\prec(J_T))=\beta_{13}(LT_\prec(J_T))=\beta_{13}(I(T^*))$$
and the conclusion follows.
\end{proof}

We remark that if $G$ is an arbitrary $3-$closed graph, for a
$3-$closed labeling,  the same argument as the proof of Corollary
\ref{number of reduced} shows that
$|\mathcal{G}|=|E(G)|+\beta_{13}(I(G^*))-2K_3(G).$

%%%%%%%%%%%%%%%%%%%%%%%
%%%%%%%%%%%%%%%%%%%%%%%%%%
%%%%%%%%%%%%%%%%%%%%%%%%%%%%%%%%%%%%

\section{BINOMIAL EDGE IDEALS OF CATERPILLAR TREES}
 In this section, we study the binomial edge ideals of caterpillar
 trees and some trees constructed from this kind of trees. First we
 recall its definition.

 \begin{Definition}
 A caterpillar tree is a tree $T$ with the property that it contains a path $P$  such that any
vertex of $T$ is either a vertex of $P$ or it is adjacent to a
vertex of $P$.
 \end{Definition}

\begin{figure}[h!]
\centering \scalebox{0.6}{
\begin{tikzpicture}[vertex/.style={circle,draw=black,fill=black,thick,
inner sep=0pt,minimum size=2mm}] \node[label=-90:$v_1$] (v1) at
(0,0) [vertex] {}; \node[label=-90:$v_2$] (v2) at (2,0) [vertex] {};
\node[label=-90:$v_3$] (v3) at (4,0) [vertex] {};
\node[label=-90:$v_4$] (v4) at (6,0) [vertex] {};
\node[label=-90:$v_5$] (v5) at (8,0) [vertex] {};
\node[label=-90:$v_6$] (v6) at (10,0) [vertex] {};
\node[label=-90:$v_7$] (v7) at (12,0) [vertex] {}; \node[label=-90:]
(b1) at (1.32,1.7) [vertex] {}; \node[label=-90:] (b2) at (2.68,1.7)
[vertex] {}; \node[label=-90:] (b3) at (6.68,1.7) [vertex] {};
\node[label=-90:] (b4) at (8.68,1.7) [vertex] {}; \node[label=-90:]
(b5) at (10.68,1.7) [vertex] {}; \node[label=-90:] (b6) at
(9.32,1.7) [vertex] {}; \draw (v2)--(b1); \draw (v2)--(b2); \draw
(v4)--(b3); \draw (v5)--(b4); \draw (v6)--(b5); \draw (v6)--(b6);
\draw (v1)--(v2)--(v3)--(v4)--(v5)--(v6)-- (v7);
\end{tikzpicture}}
\caption{}
\end{figure}

Note that  the path $P$ in the definition of a caterpillar tree is a
longest induced path of $T$ and we call it the central path of $T$.
Figure 2 is an example of a caterpillar tree with the central path
$P:v_1,v_2,\ldots ,v_7$.

 Caterpillar trees were first studied by Harary and
Schwenk \cite{HS}. These graphs have some applications in chemistry
and physics \cite{EI}.

Let $T$ be a caterpillar tree and $\ell$ be the length of its
longest induced path. By \cite[Theorem 1.1]{EHH}
$\depth(R/J_T)=|V(T)|+1$ and by \cite[Theorem 4.1]{ChDI}
$\reg(R/J_T)=\ell$. In the following we describe the minimal primary
decomposition of $J_T$. We recall that since $J_T$ is a radical
ideal, to know the minimal primary decomposition of $J_T$, it is
enough to characterize its minimal prime ideals.

\begin{Theorem}\label{primary of caterpillar}
Let $T$  be a caterpillar tree,  $ P: v_{1}, \ldots, v_{l} $ be the
central path of $T$ and $S\subset V(T)$. Then  $ P_{S}(T) $ is a
minimal prime ideal of $ J_{T} $ if and only if $ S=\emptyset $ or $
S=\lbrace v_{i_{1}}, \ldots, v_{i_{k}}\rbrace\subseteq \lbrace
v_{1}, \ldots, v_{l}\rbrace $ where $ 1<i_{1}<\cdots<i_{k}<l $
satisfy  the following conditions:
\begin{itemize}
\item
If $ \deg(v_{i_{j}})=2 $, then $ d(v_{i_{j}},v_{i_{j+1}})\geq 2 $
and $ d(v_{i_{j}},v_{i_{j-1}})\geq 2 $
\item
If $ \deg(v_{i_{j}})=3 $, then $ d(v_{i_{j}},v_{i_{j+1}})\geq 2 $ or
$ d(v_{i_{j}},v_{i_{j-1}})\geq 2 $.
\end{itemize}
\end{Theorem}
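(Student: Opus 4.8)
The plan is to reduce the statement to the purely graph-theoretic criterion of Lemma \ref{minimal prime} and then read off, component by component, what that criterion demands for the special shape of a caterpillar tree. The case $S=\emptyset$ is immediate, so assume $S\neq\emptyset$; then $P_S(T)$ is a minimal prime of $J_T$ if and only if $c(S\setminus\{v\})<c(S)$ for every $v\in S$. First I would record an elementary reformulation of this inequality. Writing $H=T_{V(T)\setminus S}$ for the induced subgraph on the complement of $S$, passing from $S$ to $S\setminus\{v\}$ simply re-inserts the single vertex $v$ together with the edges joining it to $V(T)\setminus S$. If $v$ has neighbours in exactly $t$ distinct components of $H$, those $t$ components merge with $v$ into one, so
\[
c(S\setminus\{v\})=c(S)-t+1.
\]
Hence $c(S\setminus\{v\})<c(S)$ if and only if $v$ meets at least two distinct components of $H$. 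This single criterion is what I will verify for each $v\in S$.

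Next I would use the tree/caterpillar structure to identify those components explicitly. A vertex of degree $1$ has a single neighbour and so can never meet two components of $H$; thus no leaf lies in $S$. Since $P$ is a longest induced path, its endpoints satisfy $\deg v_1=\deg v_l=1$ (a further neighbour of $v_1$ would be a leaf whose adjunction lengthens the induced path), so $v_1,v_l\notin S$. This already forces $S=\{v_{i_1},\dots,v_{i_k}\}\subseteq\{v_2,\dots,v_{l-1}\}$ with $1<i_1<\cdots<i_k<l$, exactly the index range in the statement. Now fix $v_{i_j}\in S$: its neighbours in $T$ are the two path-neighbours $v_{i_j-1},v_{i_j+1}$ together with precisely $\deg(v_{i_j})-2$ pendant leaves. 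In $H$ each such leaf is isolated (its only neighbour $v_{i_j}$ has been deleted), hence forms its own singleton component; and since $T$ is a tree, the unique $T$-path from $v_{i_j-1}$ to $v_{i_j+1}$ passes through $v_{i_j}\in S$, so $v_{i_j-1}$ and $v_{i_j+1}$ lie in different components of $H$ whenever they survive. I would then check that the ``left'' path-component, the ``right'' path-component, and the pendant singletons are pairwise distinct, so that the number of components of $H$ met by $v_{i_j}$ is
\[
N_j=(\deg(v_{i_j})-2)+[\,v_{i_j-1}\notin S\,]+[\,v_{i_j+1}\notin S\,].
\]

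Finally I would translate $N_j\geq 2$ into the distance conditions. Since the $S$-vertices have strictly increasing indices, $v_{i_j+1}\in S$ exactly when $i_{j+1}=i_j+1$, i.e.\ $d(v_{i_j},v_{i_{j+1}})=1$ (with the convention $d(v_{i_k},v_{i_{k+1}})=\infty$, reflecting that the path vertex $v_{i_k+1}$ is not in $S$), and symmetrically $v_{i_j-1}\in S$ iff $d(v_{i_j},v_{i_{j-1}})=1$. Feeding the three possibilities for $\deg(v_{i_j})$ into the formula for $N_j$ then yields exactly the listed conditions: for $\deg(v_{i_j})=2$ the requirement $N_j\geq 2$ forces both indicators to equal $1$, i.e.\ $d(v_{i_j},v_{i_{j-1}})\geq 2$ \emph{and} $d(v_{i_j},v_{i_{j+1}})\geq 2$; for $\deg(v_{i_j})=3$ it forces at least one of them, i.e.\ the ``or''; and for $\deg(v_{i_j})\geq 4$ the leaf contribution alone already gives $N_j\geq 2$, so no extra condition appears (which is why the statement lists none). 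Doing this for every $j$ gives the equivalence. The main thing to be careful about is the bookkeeping in the middle paragraph --- proving that the two path-components and the pendant singletons are genuinely distinct components of $H$, which is precisely where treeness is used --- together with a clean treatment of the boundary indices $j=1$ and $j=k$, where one path-neighbour always survives and so the corresponding distance condition is vacuous.
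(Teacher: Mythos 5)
Your proposal is correct, but it follows a genuinely different route from the paper's own proof. Both arguments reduce the statement to Lemma \ref{minimal prime}; the paper then handles the two implications separately: the ``if'' direction by induction on $|S|$, adjoining one vertex $v_{i_{m+1}}$ at a time and tracking, through a case analysis on $d(v_{i_m},v_{i_{m+1}})$ and on the degrees, how $c(S')$ and each $c(S'\setminus\{v_{i_j}\})$ change; and the ``only if'' direction by a separate three-case analysis of the possible violations (a leaf in $S$; a degree-$2$ path vertex with a path neighbour in $S$; a degree-$3$ path vertex with both path neighbours in $S$). You replace all of this by a single exact local identity: re-inserting $v$ merges the $t$ components of $T_{V(T)\setminus S}$ that contain a neighbour of $v$ into one, so $c(S\setminus\{v\})=c(S)-t+1$, whence minimality is equivalent to every $v\in S$ meeting at least two components; for a caterpillar this count is $t=(\deg(v_{i_j})-2)+[\,v_{i_j-1}\notin S\,]+[\,v_{i_j+1}\notin S\,]$, and the single inequality $t\geq 2$ reproduces in one stroke the stated degree/distance conditions, the exclusion of leaves from $S$, and the exclusion of $v_1,v_l$. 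What your approach buys is uniformity: no induction, no separate treatment of the two directions, and the steps the paper labels ``easy to see'' or ``straightforward observation'' become instances of one transparent formula. What the paper's inductive bookkeeping buys is that it works directly with the global counts $c(S)$ and never needs structural facts about caterpillars to be isolated; you, by contrast, should record explicitly the two such facts you rely on --- every vertex off the central path is a pendant leaf, and $\deg v_1=\deg v_l=1$ --- both of which follow because in a tree every path is induced, so a longest induced path cannot be extended past its endpoints.
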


\begin{proof}
We prove that each prime ideal corresponding to a set $ S $, where $
S $ is satisfying in the mentioned conditions, is a minimal prime
ideal by
 induction on the number of vertices in the set $ S $.

For $ k=1 $ the statement is obvious. Now assume theorem is true for
each $S$ with $|S|=m$ and  $ S^{'}=\lbrace v_{i_{1}}, \ldots,
v_{i_{m+1}}\rbrace $ has the mentioned conditions. If
$S=\{v_{i_1},\ldots , v_{i_m}\}$, by induction hypothesis, $P_S(T)$
is a minimal prime ideal of $J_T$. Let $d=\deg(v_{i_{m+1}})$ and
$d'=\deg(v_{i_m})$.

 Depending on
$ d(v_{i_m},v_{i_{m+1}}) $, we distinguish the following cases:
\begin{itemize}
\item[\textbf{case 1:}]
$ d(v_{i_m},v_{i_{m+1}})\geq 2 $. In this case it is easy to see
that $ c(S^{'})=c(S)+d-1 $ and  for all $j\in\lbrace 1, \cdots,
m\rbrace,~~c(S^{'}\setminus\lbrace
v_{i_{j}}\rbrace)=c(S\setminus\lbrace v_{i_{j}}\rbrace)+d-1 $.
\item[\textbf{case 2:}]
$ d(v_{i_m},v_{i_{m+1}})=1 $.  In this case, $d\geq 3$ and $d'\geq
3$. A straightforward observation shows that $ c(S^{'})=c(S)+d-2 $
and for all $j\in\lbrace 1, \ldots, m-1\rbrace,~~ c(S^{'}\setminus
\lbrace v_{i_{j}}\rbrace)=c(S\setminus \lbrace v_{i_{j}}\rbrace)+d-2
$. Moreover for deleting the vertex $v_{i_m}$, one of the following
situations happens:
\begin{itemize}
\item[(a)]
$m=1$ or $ d(v_{i_{{m-1}}},v_{i_{m}})=2 $. One can see $
c(S^{'}\setminus \{v_{i_{m}}\})=c(S')-(d'-2)$.
\item[(b)]
$d'\geq 4$. In this case, $c(S')\geq c(S'\setminus
\{v_{i_m}\})+(d'-3)$.
\end{itemize}
\end{itemize}
It is obvious that in all of the above situations, $
c(S^{'}\setminus \lbrace v_{i_{j}}\rbrace)<c(S^{'})~~$ for all $j\in
\lbrace 1, \cdots, m+1\rbrace$.
 So, Lemma \ref{minimal prime} implies $ P_{S'}(T) $ is a minimal prime ideal of $ J_{T}
 $.\\

Now assume that  $S\subset V(T)$ is not as described in the theorem.
So, one of the following situation happens:
\begin{itemize}
\item[1)]
 There exists a vertex  $v$ of degree $1$ in $S$. In this case,  $c(S\setminus\lbrace v\rbrace)\geq
 c(S)$. So, by Lemma \ref{minimal prime}, $P_S(T)$ is not a minimal
 prime ideal of $J_T$.
\item[2)]
 For some $j$, $\deg(v_{i_j})=2$, and
 ($d(v_{i_j},v_{i_{j+1}})=1$ or  $d(v_{i_j},v_{i_{j-1}})=1$).
 Without loss of generality assume that $ d(v_{i_{j-1}},v_{i_{j}})=1 $. Since $ v_{i_{j-1}} $ and $ v_{i_{j}} $ are connected through just one edge,
  removing the vertex $ v_{i_{j}} $ doesn't change the number of connected components of $ T_{V(T)\setminus S} $, meaning that
   $ c(S\setminus\lbrace v_{i_{j}}\rbrace)=c(S) $. Again, by Lemma \ref{minimal prime}, $P_S(T)$ is not a minimal
 prime ideal of $J_T$.
\item[3)]
  For some $j$, $\deg(v_{i_j})=3$,
 $d(v_{i_j},v_{i_{j+1}})=1$ and $d(v_{i_j},v_{i_{j-1}})=1$. In this situation also
 straightforward observation shows that $ c(S\setminus\lbrace
 v_{i_{j}}\rbrace)=c(S)$. So, $P_S(T)$ is not a minimal
 prime ideal of $J_T$.
\end{itemize}
So the conclusion follows.
\end{proof}
For example, if $T$ is the caterpillar tree described in Figure 2,
then by Theorem \ref{primary of caterpillar}, it is easy to find all
minimal prime ideals of $J_T$ and see that $\dim(R/J_T)=19$.

 %The regularity of caterpillar trees is studied in ....
%In fact caterpillar trees are a subclass of the class of
%${\mathfrak{C}_\ell}$-graphs.
%
%A graph $G$ is a ${\mathfrak{C}_\ell}$-graph if it  is obtained from
%a graph $H$ with $\Delta(H)= \langle F_1,\ldots ,F_\ell\rangle$
%whose binomial edge ideal is Cohen-Macaulay (see...) by attaching
%edges in the intersection points of the facets of $\Delta(H)$.
%
%
%Recently, many author study the regularity of Binomial edge ideals.
% (see ...). In ... it was shown that, for any connected graph $G$ on the vertex
%set $[n]$, we have $\ell\leq reg(S/J_G)\leq n-1$, where $\ell$ is
%the length of the longest induced path of $G$. In particular, if $G$
%is a closed graph or  a ${\mathfrak{C}_\ell}$-graph, then the lower
%bound for the regularity achieved (see...).
%
%In the following we show that any ${\mathfrak{C}_\ell}$-graph is
%$3-$closed. We also improve the upper bound for the regularity of a
%class of graphs constructed from ${\mathfrak{C}_\ell}$-graphs.
%
%\begin{Theorem}
%(a) Let $G$ be a ${\mathfrak{C}_\ell}$-graph. Then $G$ is
%$3-$closed.
%
%(b) Let $G=G_1\cup E\cup \cup G_2$  where each $G_i$ is a
%$\mathfrak{C}_{\ell_i}$ graph, and  $E$ is a bridge of $G$ whose
%endpoints belong to the longest induced path of $G_1$ and $G_{2}$
%respectively. Then $reg(S/J_G)\leq \ell_1+\ell_2$.
%\end{Theorem}
Finally, we prove that  caterpillar trees and some trees constructed
by caterpillar trees are $3-$closed.

\begin{Theorem}\label{caterpilar trees}
 (a) Let $T$ be a caterpillar tree. Then $T$ is $3-$closed.

(b) Let $T=T_1\cup B\cup T_2$ where $T_1$ and $T_2$ are two
caterpillar trees and $B$ is a bridge between $T_1$ and $T_2$, and
the endpoints of $B$ are chosen from the  vertices of the central
paths of $T_1$ and $T_2$ respectively. Then $T$ is $3-$closed.

More generally,

(c)  Let  $T$ be a tree and $T=T_1\cup B\cup T_2$ where $T_1$, $T_2$
and $B$ are caterpillar trees, and the endpoints of the central path
of $B$ are chosen from the vertices of $T_1$ and $T_2$ respectively.
Then $T$ is $3-$closed.

\end{Theorem}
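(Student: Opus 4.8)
The plan is to reduce every case to the distance criterion of Theorem \ref{3-closed trees}. Each of the trees in (a)--(c) has a vertex of degree $\ge 3$, hence is not a path, so by that theorem it suffices to produce a labeling of its vertices in which $d(i,i+1)\le 2$ for every $i$; this is exactly what I would exhibit. (If the caterpillar in (a) degenerates to a path it is already closed, so there is nothing to prove.)

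For part (a), let $P: v_1,\ldots,v_\ell$ be the central path of $T$, and for each $t$ let $w_{t,1},\ldots,w_{t,d_t}$ be the pendant vertices attached to $v_t$ that do not lie on $P$; since $P$ is a longest induced path, the endpoints satisfy $d_1=d_\ell=0$. I would label the vertices by sweeping along $P$ and inserting the pendants of each spine vertex immediately after it, assigning the labels in the order
\[
v_1,\ w_{1,1},\ldots,w_{1,d_1},\ v_2,\ w_{2,1},\ldots,w_{2,d_2},\ \ldots,\ v_\ell .
\]
Every consecutive pair in this list is either a spine vertex and one of its pendants (distance $1$), two pendants of the same spine vertex (distance $2$), or the last pendant of $v_t$ together with $v_{t+1}$, which lie at distance $2$ through $v_t$; when $v_t$ carries no pendant the pair $v_t,v_{t+1}$ has distance $1$. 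Thus $d(i,i+1)\le 2$ for all $i$, and $T$ is $3$-closed.

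For parts (b) and (c) the idea is to glue the already-treated caterpillar pieces along bridges using Proposition \ref{bridge}(iii). In (b) I would take $H_1=T_1$, $H_2=T_2$ and the bridge $e=B$; both $T_1$ and $T_2$ are $3$-closed by (a). Proposition \ref{bridge}(iii) then yields that $T$ is $3$-closed as soon as each $T_i$ admits a $3$-closed labeling placing the label $1$ at the endpoint of $B$ on the central path of $T_i$, and such labelings are exactly what Algorithm 1 produces (a $3$-closed labeling of a caterpillar with $1$ pinned at a prescribed vertex). For part (c) I would peel the caterpillar bridge $B$ off along a single edge of its central path: this edge is a bridge of $T$ splitting it into a piece containing $T_1$ and a piece containing $T_2$, each again of caterpillar type up to the attachment of finitely many pendants. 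Applying Proposition \ref{bridge}(iii) together with Algorithm 1 to these pieces and iterating along the spine of $B$ then gives a labeling of all of $T$ with $d(i,i+1)\le 2$, so $T$ is $3$-closed.

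I expect the genuine work to sit in Algorithm 1, i.e.\ in producing a $3$-closed labeling of a caterpillar whose label $1$ is pinned at an arbitrary prescribed vertex. When that vertex lies in the interior of the central path, or is an off-spine pendant as can occur for the attachment vertices in (c), the naive one-directional sweep of part (a) breaks down: after exhausting one side of the spine, the jump back across the pinned vertex to the other side can exceed distance $2$. The labeling must therefore interleave the two halves of the spine around the pinned vertex while respecting the distance-$2$ constraint at every step, and verifying that this is always possible is the delicate combinatorial point. This is precisely the role of Algorithm 1 in the Appendix, and the correctness of the gluing statements (b) and (c) rests on it.
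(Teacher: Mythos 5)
Your parts (a) and (b) are correct and essentially the paper's own argument: the sweep-along-the-spine labeling for (a) (the paper orders each spine vertex before its pendants, you do the same up to an immaterial shift), and Proposition \ref{bridge}(iii) plus Algorithm 1 (a $3$-closed labeling with $1$ pinned at the prescribed central-path vertex) for (b). You also correctly identify Algorithm 1 as the place where the real work happens.

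Part (c), however, contains a genuine gap. You cut a single edge of the central path of $B$ and claim that the two pieces are ``again of caterpillar type up to the attachment of finitely many pendants.'' This is false in general: the piece containing $T_1$ is $T_1$ with a sub-caterpillar of $B$ glued at the spine vertex $v$, and if that sub-spine has length at least $2$ while $v$ is an interior vertex of the central path of $T_1$, the piece is a tripod with three long legs, which is not a caterpillar with pendants attached; so neither part (a) nor Algorithm 1 applies to it. Your fallback, ``iterating along the spine of $B$,'' does not repair this, because Proposition \ref{bridge}(iii) only concludes that the glued tree is $3$-closed --- it does not output a $3$-closed labeling with the label $1$ pinned at a prescribed vertex, which is exactly the hypothesis the next step of your iteration would need. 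To make an induction of this type work you would have to strengthen the inductive statement to: every tree obtained by gluing a caterpillar bridge to a caterpillar at a spine vertex admits a $3$-closed labeling with $1$ at the free end of the bridge's spine. The paper sidesteps all of this by a direct concatenation of labelings: after the reduction (also needed in your argument) that lets one assume the endpoints $v,w$ of the central path of $B$ lie on the central paths of $T_1,T_2$, it uses Remark \ref{algorithm remark} to label $T_1$ so that its largest label $n_1$ lands on $v$, then the sweep of part (a) to label $B$ with $n_1,\ldots,n_2$ so that $v$ receives $n_1$ and $w$ receives $n_2$, then Algorithm 1 to label $T_2$ with $n_2,\ldots,n_3$ so that $w$ receives $n_2$; the concatenated labeling satisfies $d(i,i+1)\leq 2$ globally, and Theorem \ref{3-closed trees} finishes the proof. (Both you and the paper gloss over the degenerate situations where the resulting tree is a path and hence only $2$-closed; that imprecision is shared and not the issue here.)
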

\begin{proof}
(a) Let $n=|T|$, it is enough to find a labeling of $V(T)$ such that
$d(i,i+1)\leq 2$ for each $1\leq i<n$.

Let $P:v_1,\ldots ,v_\ell$ be the central path of $T$, and for each
$1\leq j\leq \ell$, $ N'_{T}(v_{j})=N_{T}(v_j)\setminus V(P)$ is
determined the leaf neighbors of the vertex $ v_{j}. $

\vspace{2mm}

We do as follows:

\vspace{2mm}
\hspace{-0.4cm}label $ v_{1} $ as $ 1 $; $t=2$; $j=2$;\\
\textbf{While} $ j\leq \ell $ \textbf{do}\\
\hspace{0.7cm} label $v_{j}$ as $t$; $t=t+1$;\\
\hspace{0.7cm}$ S:=N'_{T}(v_{j}) $;\\
\hspace{0.7cm}\textbf{While} $ S\neq \emptyset $ \textbf{do};\\
\hspace{1.4cm}$ v $:= pick $ v\in S $ such that $ v $ is the rightmost leaf of $ v_{j} $;\\
\hspace{1.4cm}label $ v $ as $ t $;\\
\hspace{1.4cm}$ t=t+1 $; $ S=S\setminus \lbrace{ v\rbrace} $;\\
\hspace{0.7cm}\textbf{end};\\
\hspace{0.7cm}$ j=j+1 $;\\
\textbf{end}\\
\vspace{1mm}

It is easy to see that by this labeling of $V(T)$, $d(i,i+1)\leq 2$
for all $1\leq i< n$.

(b) By proposition \ref{bridge}, it is enough to show that for each
caterpillar tree $T$ and each vertex $v$ of its central path, there
exists a $3-$closed labeling in which $1$ is assigned  to $v$. This
fact follows from Algorithm 1.

(c) Without loss of generality, we can assume that the endpoints of
the central path of  $B$ are chosen from the vertices of the central
paths of $T_1$ and $T_2$ respectively.  Because if this is not the
case and for example $\{v\}=V(T_1)\cap V(B)$ where $v$ is not a
vertex of the central path of $T_1$, then there exists a vertex $w$
of the central path of $T_1$ such that $e=\{v,w\}\in E(T_1)$. So we
can replace $T_1$ with $T_1\setminus e$ and $B$ with $B\cup e$. We
can also assume that $E(T_1)$, $E(T_2)$ and $E(B)$ are pairwise
disjoint sets.

Let $v\in V(B)\cap V(T_1)$ and $w\in V(B)\cap V(T_2)$. By Algorithm
1, there exists a $3-$closed labeling of $V(T_1)$ that assigns
$n_1=|V(T_1)|$ to $v$. By part (a) of the proof there exists a
$3-$closed labeling of $V(B)$ with integers $n_1,\ldots ,
n_2=n_1+|V(B)|-1$  that assigns  $n_1$ to $v$ and $n_2$ to $w$.
Again by Algorithm 1 there exists a $3-$closed labeling of $V(T_2)$
with integers $n_2,\ldots , n_3=n_2+|V(T_2)|-1$ that associate $n_2$
to $w$. All together we get a $3-$closed labeling of $T$ and the
conclusion follows.
\end{proof}
 By \cite[Proposition 3.2]{M1}, a tree $T$ is weakly closed if and
 only if $T$ is a  caterpillar tree. So, by Theorem \ref{caterpilar
 trees}, If $T$ is a weakly closed graph, then
 $T $ is $3-$closed.

 %%%%%%%%%%%%%%%%%%%
 %%%%%%%%%%%%%%%%%%%%
 %%%%%%%%%%%%%%%%%%%%%
\section{Appendix}
In the following we introduce an algorithm to label the vertices of
a caterpillar tree $T$ with integers $1,\ldots ,n$ such that
$d(i,i+1)\leq 2$ for all $1\leq i<n$. Suppose that the central path
of $T$ is  $P:v_1,\ldots ,v_\ell$ and for each $1\leq j\leq \ell$, $
N'_{T}(v_{j})=N_{T}(v_j)\setminus V(P)$ is determined the leaf
neighbors of the vertex $ v_{j}. $

The algorithm works as follows. First a candidate for $ 1 $ is found
by choosing an arbitrary  vertex of the central path which is called
$ v_{i_{0}} $. We then go through the vertices in the central path.
If $ v_{i_{0}+1} $ has some leaf neighbors, we label them $ 2,
\ldots, t $ from right to left, and then we label $ v_{i_{0}+2} $ as
$ t+1 $. Otherwise we label $ v_{i_{0}+2} $ as 2. Then we set $
j=i_{0}+2 $ and this process is repeated for the next vertices of
the $ v_{j} $ until we reach the endpoint of $P$. In the return path
from $ v_{l} $ to $ v_{1} $ and then from $ v_{1} $ to $v_{i_0}$ the
similar process is repeated until every vertex is labeled.
$  $\\
\newpage
\begin{tabular}{l}
%\caption{Mapping from transversal design to key distribution}
\hline \textbf{Algorithm 1}: labeling algorithm of caterpillars trees \\
\hline
\textbf{Input}: A caterpillar tree $T$ with the central path $P:v_1,\ldots ,v_\ell$.\\
\textbf{Output}: A $3-$closed labeling of $T$\\
$ v_{i_{0}} $:= one of the vertices on the central path;\\
$j:=i_{0}$;\ \
label $ v_{i_{0}} $ as $ 1 $;\ \  $t:=2$;\\
\textbf{While} $ j<\ell-1 $ \textbf{do}\\
\hspace{0.7cm}$ S:=N'_T(v_{j+1}) $;\\
\hspace{0.7cm}\textbf{While} $ S\neq \emptyset $ \textbf{do};\\
\hspace{1.4cm}$ v $:= pick $ v\in S $ such that $ v $ is the rightmost leaf of $ v_{j+1} $; label $ v $ as $ t $;\\
%\hspace{1.4cm}label $ v $ as $ t $;\\
\hspace{1.4cm}$ t=t+1 $; $ S=S\setminus \lbrace{ v\rbrace} $;\\
\hspace{0.7cm}\textbf{end};\\
\hspace{0.7cm}label $ v_{j+2} $ as $ t $;\\
\hspace{0.7cm}$ j=j+2 $; $ t=t+1 $;\\
\textbf{end}\\
\textbf{If} $ j==\ell-1 $\\
\hspace{0.7cm}label $ v_{\ell} $ as $ t $;\\
\hspace{0.7cm}$ j=l $; $ t=t+1 $;\\
\textbf{Otherwise}\\
\hspace{0.7cm}label $ v_{\ell-1} $ as $ t $;\\
\hspace{0.7cm}$ j=\ell-1 $; $ t=t+1 $;\\
\textbf{end}\\
\textbf{While} $ j>2 $ \textbf{do}\\
\hspace{0.7cm}$ S:=N'_T(v_{j-1}) $;\\
\hspace{0.7cm}\textbf{While} $ S\neq \emptyset $ \textbf{do};\\
\hspace{1.4cm}$ v $:= pick $ v\in S $ such that $ v $ is the rightmost leaf of $ v_{j-1} $; label $ v $ as $ t $;\\
%\hspace{1.4cm}label $ v $ as $ t $;\\
\hspace{1.4cm}$ t=t+1 $; $ S=S\setminus \lbrace{ v\rbrace} $;\\
\hspace{0.7cm}\textbf{end};\\
\hspace{0.7cm}label $ v_{j-2} $ as $ t $;\\
\hspace{0.7cm}$ j=j-2 $; $ t=t+1 $;\\
\textbf{end}\\
\textbf{If} $ j==2$ and $i_{0}>1 $\\
\hspace{0.7cm}label $ v_{1} $ as $ t $;\\
\hspace{0.7cm}$ j=1 $; $ t=t+1 $;\\
\textbf{Otherwise}\\
\hspace{0.7cm}\textbf{If} $i_0>2$\\
\hspace{1.4cm}label $ v_{2} $ as $ t $;\\
\hspace{1.4cm}$ j=2 $; $ t=t+1 $;\\
\hspace{0.7cm}\textbf{end};\\
\textbf{end };\\
\textbf{While} $ j<i_{0}-2 $ \textbf{do}\\
 \hspace{0.7cm}$ S:=N'_T(v_{j+1}) $;\\
\hspace{0.7cm}\textbf{While} $ S\neq \emptyset $ \textbf{do};\\
\hspace{1.4cm}$ v $:= pick $ v\in S $ such that $ v $ is the rightmost leaf of $ v_{j+1} $; label $ v $ as $ t $;\\
%\hspace{1.4cm}label $ v $ as $ t $;\\
\hspace{1.4cm}$ t=t+1 $; $ S=S\setminus \lbrace{ v\rbrace} $;\\
\hspace{0.7cm}\textbf{end};\\
\hspace{0.7cm}label $ v_{j+2} $ as $ t $;\\
\hspace{0.7cm}$ j=j+2 $; $ t=t+1 $;\\
\textbf{end}\\
\textbf{If} $j==i_0-2$\\
\hspace{0.7cm}$S:=N'_T(v_{i_{0}-1})$;\\
\hspace{0.7cm}\textbf{While} $ S\neq \emptyset $ \textbf{do}\\
\hspace{1.4cm}$ v $:= pick $ v\in S $ such that $ v $ is the rightmost leaf of $ v_{i_0-1} $; label $ v $ as $ t $;\\
%\hspace{1.4cm}label $ v $ as $ t $;\\
\hspace{1.4cm}$ t=t+1 $; $ S=S\setminus \lbrace{ v\rbrace} $;\\
\hspace{0.7cm}\textbf{end};\\
\textbf{end};\\
\hline
\end{tabular}

\begin{Remark}\label{algorithm remark}
If one wants to give a $3-$closed labeling to a caterpillar tree $T$
in such  a way that $1$ is assigned to $v\in N'_T(v_{i_0})$ for some
$1<i_0<\ell$, it is enough to  label $v$ as $1$, $v_{i_0}$ as $2$,
set $N'_T(v_{i_0})=N'_T(v_{i_0})\setminus \{v\}$ and start with
$t:=3$ instead of $t:=2$.

Moreover, if one wants to give a $3-$closed labeling to a
caterpillar tree $T$ in such a way that $n=|V(T)|$ is assigned  to
an arbitrary vertex $v$, it is enough to apply Algorithm 1, by
labeling $v$ as 1 and at the end changing the label $i$ of each
vertex to $n-i+1$.
\end{Remark}

\begin{Example}
Here, we give an example of a labeled caterpillar tree using
Algorithm 1. Note that $12$ is the label of $v_1$, $11$ is the label
of $v_2$, $1$ is the label of $v_3$ and so on .
 $\ $
\begin{figure}[h!]
\centering \scalebox{0.6}{
\begin{tikzpicture}[vertex/.style={circle,draw=black,fill=black,thick,
inner sep=0pt,minimum size=2mm}] \node[label=-90:$12$] (v1) at (0,0)
[vertex] {}; \node[label=-90:$11$] (v2) at (2,0) [vertex] {};
\node[label=-90:$1$] (v3) at (4,0) [vertex] {}; \node[label=-90:$7$]
(v4) at (6,0) [vertex] {}; \node[label=-90:$3$] (v5) at (8,0)
[vertex] {}; \node[label=-90:$6$] (v6) at (10,0) [vertex] {};
\node[label=-90:$5$] (v7) at (12,0) [vertex] {};
\node[label=-90:$10$] (v8) at (2.5,2) [vertex] {};
\node[label=-100:$9$] (v9) at (4,2) [vertex] {};
\node[label=-90:$8$] (v10) at (5.5,2) [vertex] {};
\node[label=-90:$2$] (v11) at (7.5,2) [vertex] {};
\node[label=-90:$4$] (v12) at (11.5,2) [vertex] {};
\draw(v1)--(v2)--(v3)--(v8)--(v3)--(v4)--(v5)--(v6)--(v7); \draw
(v3)--(v9); \draw (v3)--(v10); \draw (v4)--(v11); \draw (v6)--(v12);
\end{tikzpicture}}
\caption{}
\end{figure}

Finally, we give an example of a $3-$closed tree described in
Theorem \ref{caterpilar trees}(part b). Note that the labeling is
given by Algorithm 1, and Proposition \ref{bridge}.

\begin{figure}[h!]
\centering \scalebox{0.6}{
\begin{tikzpicture}
[vertex/.style={circle,draw=black,fill=black,thick, inner
sep=0pt,minimum size=2mm}] \node[label=90:$1$] (v1) at (0,0)
[vertex] {}; \node[label=90:$2$] (v2) at (2,0) [vertex] {};
\node[label=100:$12$] (v3) at (4,0) [vertex] {}; \node[label=90:$6$]
(v4) at (6,0) [vertex] {}; \node[label=90:$10$] (v5) at (8,0)
[vertex] {}; \node[label=90:$7$] (v6) at (10,0) [vertex] {};
\node[label=90:$8$] (v7) at (12,0) [vertex] {}; \node[label=90:$3$]
(v8) at (2.5,-2) [vertex] {}; \node[label=100:$4$] (v9) at (4,-2)
[vertex] {}; \node[label=90:$5$] (v10) at (5.5,-2) [vertex] {};
\node[label=90:$11$] (v11) at (7.5,-2) [vertex] {};
\node[label=90:$9$] (v12) at (11.5,-2) [vertex] {};

\node[label=-90:$21$] (v13) at (0,2.5) [vertex] {};
\node[label=-90:$20$] (v14) at (2,2.5) [vertex] {};
\node[label=-100:$13$] (v15) at (4,2.5) [vertex] {};
\node[label=-90:$17$] (v16) at (6,2.5) [vertex] {};
\node[label=-90:$15$] (v17) at (8,2.5) [vertex] {};
\node[label=-90:$16$] (v18) at (10,2.5) [vertex] {};
\node[label=90:$23$] (v19) at (1.5,4.5) [vertex] {};
\node[label=90:$22$] (v20) at (2.5,4.5) [vertex] {};
\node[label=90:$19$] (v21) at (3.5,4.5) [vertex] {};
\node[label=90:$18$] (v22) at (4.5,4.5) [vertex] {};
\node[label=90:$14$] (v23) at (7.5,4.5) [vertex] {}; \draw
(v1)--(v2)--(v3)--(v8)--(v3)--(v4)--(v5)--(v6)--(v7); \draw
(v3)--(v9); \draw (v3)--(v10); \draw (v4)--(v11); \draw (v6)--(v12);
\draw (v13)--(v14)--(v15)--(v16)--(v17)--(v18); \draw (v14)--(v19);
\draw (v14)--(v20); \draw (v15)--(v21); \draw (v15)--(v22); \draw
(v16)--(v23); \draw (v3)--(v15);
\end{tikzpicture}}
\caption{}
\end{figure}
\end{Example}
\subsection*{Acknowledgements}
 We are very grateful to the anonymous
referee for suggesting us Theorem \ref{weakly closed}.
\bibliographystyle{amsalpha}

 \end{document}